\newtheorem{theorem}{Theorem}
\newtheorem{corollary}{Corollary}
\newtheorem{proposition}{Proposition}
\newtheorem{lemma}{Lemma}
\newtheorem{remark}{Remark}
\begin{document}

\title{Global existence and long term behavior of 2D electro-hydrodynamics}
\author{Rolf J. Ryham\\ {\it Department of Mathematics MS-136} \\ {\it Rice University}\\{\it Houston, TX 77006}} 
\maketitle

\abstract{We study the equations of a two dimensional  incompressible Newtonian fluid coupled with a dispersive parabolic-elliptic system
on bounded domains.
Global in time weak  solutions
are shown to exist  and converge with a rate to the stationary solution for $L^2$ initial data.
This paper extends and improves on a body of work surrounding the Debye-H\"uckel system to the 
hydrodyanamical case.}

\section{Introduction}
The equations of  electro-hydrodynamics are 
\begin{gather}
\label{NS}
\frac{\partial u}{\partial t} +  u \cdot  \nabla u + \nabla p = \Delta u + \Delta \phi \nabla \phi,\\
\label{DIV}
\nabla \cdot u  = 0,\\
\label{NP1}
\frac{\partial v}{\partial t} + u \cdot \nabla v  = \nabla \cdot \left(\nabla v -v \nabla \phi \right),\\
\label{NP2}
\frac{\partial w}{\partial t} + u \cdot \nabla w = \nabla \cdot \left( \nabla w +w \nabla \phi \right),\\
\label{poisson}
\Delta \phi = v - w.
\end{gather}
On bounded domains, solutions are determined by the conditions
\begin{gather}
\label{F1}
\frac{\partial v}{\partial \nu} -v \frac{\partial \phi}{\partial \nu} = 0,\\
\label{F2}
\frac{\partial w}{\partial \nu} +w \frac{\partial \phi}{\partial \nu} = 0,
\end{gather}
where $\nu$ is the outward pointing normal to $\partial U$ and 
\begin{equation}
\label{S}
u = 0  
\end{equation}
on $\partial U.$  In this paper solutions of  the Poisson equation \eqref{poisson} are determined by 
\begin{equation}
\label{D}
\phi = 0
\end{equation}
on $\partial U.$ 
The evolution is determined by initial conditions 
\begin{equation}
\label{I}u(x,0) = u_0(x), \quad v(x,0) = v_0(x), \quad w(x,0) = w_0(x)
\end{equation}
$v_0(x)$ and $w_0(x)$ are assumed to be nonnegative.  Positivity and the integral
of $v$ and $w$ are conserved throughout the dynamic. 

The equations of electro-hydrodynamics are the force balance equation
of an incompressible Newtonian fluid coupled with a parabolic system of conservation 
equations and an elliptic equation.  \eqref{NS} and \eqref{DIV} are the Navier-Stokes equations
with Lorentz force $\Delta \phi \nabla \phi.$ \eqref{NP1} and \eqref{NP2} are the conservation
of mass equations of charge densities.  $u$ is the velocity field of the fluid, $p$ is the pressure and $v$ and $w$ are 
densities proportional to the probability density of a system of negatively and positively respectively,
self repelling charged particles in the fluid and $\phi$ is the electrostatic potential due to Coulombic interactions
of the charged particles along with the charge exterior to the domain fixing the boundary condition of $\phi.$ The electrostatic stress exerted by
the charges on the fluid has the form of a rank one tensor and a pressure;
\begin{equation*}
\tau_{\mathrm{e}} = \nabla \phi \otimes \nabla \phi - \frac{1}{2}|\nabla \phi|^2 I.
\end{equation*}
$\tau_{\mathrm{e}}$ stems from the balance of kinetic energy
with electrostatic energy via the least action principle, \cite{RyLiZi07}.   For simplicity, we have
assumed that the density, viscosity, mobility and dielectric constant are unity.

The equations of electro-hydrodynamics are one of many fluid-particle systems
which have attracted much attention for there challenge with regard to mathematical existence theory, 
derivation and simulation.  Electro-hydrodnamic fluids are a particularly attractive
complex fluid due to there emerging application in microfluidic devices, electric biogels, 
switchable soft matter and proton exchange membranes, \cite{BeCh02, PaCa06, SqBa04}.
Some related works centered around other nonlinear Fokker-Plank and Navier Stokes 
systems and the equations of viscoelastic fluids can be found in \cite{Co05, Co07, CoFeTiZa07, CoMa08, LiLiZh05}.
In the case of electro-hydrodynamics, the closure of the nonlinear Fokker-Plank equations 
gives the Debye-H\"uckel system, a basic model  for the diffusion of ions in an electrolyte filling all of $\mathbb{R}^3$
first formulated by W. Nernst and M. Plank at the end of the nineteenth century, \cite{DeHu23}.

\subsubsection*{Main Result}
For the remainder of the paper, $U$ is assumed to a  connected, bounded  open subset of  $\mathbb{R}^2$ with class $C^{1,1}$ boundary $\partial U.$  
The space-time cylinder is $Q = U \times \mathbb{R}^+.$

The first result of this paper concerns the existence of 
global in time solutions.
\begin{theorem}  
\label{globalex}
Given $u_0 \in \mathsf{H}(U), v_0 \in L^2(U), w_0 \in L^2(U)$ there is a unique, global in time, weak solution $u,v,w$ of  \eqref{NS}-\eqref{D}.
\end{theorem}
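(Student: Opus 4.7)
The plan is to construct weak solutions via a Galerkin approximation of \eqref{NS}--\eqref{NP2}, obtain uniform a priori estimates, pass to the limit by Aubin--Lions compactness, and prove uniqueness by a Gronwall estimate on the difference of two candidate solutions. In the Galerkin scheme I would take eigenfunctions of the Stokes operator with Dirichlet data for $u$ and of the Laplacian with Neumann data for $v,w$; $\phi^N$ is recovered at each level by solving \eqref{poisson},\eqref{D} with source $v^N-w^N$, so the truncated system is a locally Lipschitz ODE on each finite-dimensional slice. Nonnegativity of $v^N,w^N$ is obtained via a standard truncation of the initial data together with a maximum-principle argument, and inherited in the limit.

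Testing \eqref{NP1} by $v^N$ and \eqref{NP2} by $w^N$, the transport terms vanish by \eqref{DIV},\eqref{S} and the ionic boundary fluxes by \eqref{F1}-\eqref{F2}. Substituting $\Delta\phi=v-w$ in the drift and carefully handling the boundary contributions produced when $\nabla(v^2-w^2)\cdot\nabla\phi$ is integrated by parts -- by means of the boundary identity $(v+w)\partial_\nu\phi=\partial_\nu(v-w)$ on $\partial U$ inherited from \eqref{F1}-\eqref{F2}, trace inequalities, and the Dirichlet estimate $\|\phi^N\|_{H^2}\lesssim\|v^N-w^N\|_{L^2}$ -- produces
\begin{equation*}
\tfrac{d}{dt}\bigl(\|v^N\|_{L^2}^2+\|w^N\|_{L^2}^2\bigr) + \|\nabla v^N\|_{L^2}^2+\|\nabla w^N\|_{L^2}^2 + \int_U(v^N+w^N)(v^N-w^N)^2\,dx \leq C\bigl(\|v^N\|_{L^2}^2+\|w^N\|_{L^2}^2\bigr).
\end{equation*}
By positivity the cubic term is nonnegative, and Gronwall yields uniform bounds $v^N,w^N\in L^\infty_tL^2_x\cap L^2_tH^1_x$. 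Elliptic regularity on the $C^{1,1}$ domain then gives $\phi^N\in L^\infty_tH^2_x$, whence $\nabla\phi^N\in L^\infty_tL^q_x$ for every $q<\infty$ via the 2D embedding $H^2\hookrightarrow W^{1,q}$. Testing \eqref{NS} by $u^N$ and dominating the Lorentz coupling $(v^N-w^N)\nabla\phi^N\cdot u^N$ through Ladyzhenskaya's inequality $\|f\|_{L^4}^2\lesssim\|f\|_{L^2}\|\nabla f\|_{L^2}$ yields $u^N\in L^\infty_t\mathsf{H}(U)\cap L^2_tH^1_0(U)$. Companion bounds on $u^N_t,v^N_t,w^N_t$ in appropriate dual spaces, combined with Aubin--Lions, give strong $L^2(Q_T)$ convergence along a subsequence for every $T>0$; elliptic regularity upgrades this to strong convergence of $\nabla\phi^N$, and every nonlinearity -- in particular the Lorentz product $\Delta\phi\nabla\phi=(v-w)\nabla\phi$ seen as a weak--strong pairing -- passes to the limit, establishing global existence.

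For uniqueness, put $(\tilde u,\tilde v,\tilde w)=(u_1-u_2,v_1-v_2,w_1-w_2)$ and $\tilde\phi=\phi_1-\phi_2$, subtract the equations, and test against the differences. Each remaining nonlinear term is bilinear in a difference and a bounded reference solution; the 2D Ladyzhenskaya inequality, combined with the previously established $L^\infty_tL^2\cap L^2_tH^1$ bounds, permits absorption of all quadratic gradient couplings into $\epsilon(\|\nabla\tilde u\|^2+\|\nabla\tilde v\|^2+\|\nabla\tilde w\|^2)$ plus a Gronwall-integrable term, yielding
\begin{equation*}
\tfrac{d}{dt}\mathcal E(t) \leq \psi(t)\,\mathcal E(t),\qquad \mathcal E(t)=\|\tilde u\|_{L^2}^2+\|\tilde v\|_{L^2}^2+\|\tilde w\|_{L^2}^2,
\end{equation*}
with $\psi\in L^1_{\mathrm{loc}}(\mathbb R^+)$, and Gronwall forces $\mathcal E\equiv 0$.

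The principal obstacle throughout is controlling the Lorentz force. Passing to the limit hinges on trading the only-weak $L^2$ convergence of $v^N-w^N$ for strong convergence of $\nabla\phi^N$ through elliptic smoothing, and the uniqueness estimate -- in which the favorable sign of the cubic dissipation $(v+w)(v-w)^2$ is lost upon differencing -- closes only because the 2D critical embedding $H^1\hookrightarrow L^q$ ($q<\infty$) leaves just enough room. Handling the boundary residues in the basic energy identity, which do not simply vanish under the Dirichlet condition \eqref{D} for $\phi$, is the other subtle point; the identity $(v+w)\partial_\nu\phi=\partial_\nu(v-w)$ arising from \eqref{F1}-\eqref{F2} together with trace inequalities is what makes the estimate close.
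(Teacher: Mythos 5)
There is a genuine gap, and it sits exactly at the heart of the problem: your claimed energy inequality
\begin{equation*}
\tfrac{d}{dt}\|v^N,w^N\|_{L^2}^2 + \|\nabla v^N,\nabla w^N\|_{L^2}^2 + \int_U (v^N+w^N)(v^N-w^N)^2\,dx \leq C\|v^N,w^N\|_{L^2}^2
\end{equation*}
is not established by the argument you sketch, and it is precisely the statement whose proof constitutes the main work of the paper. Your interior computation is fine: writing $\int v\nabla\phi\cdot\nabla v - \int w\nabla\phi\cdot\nabla w = \tfrac12\int\nabla(v^2-w^2)\cdot\nabla\phi$ and using \eqref{poisson} does produce the signed cubic term. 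But the integration by parts leaves the boundary residue $\tfrac12\int_{\partial U}(v^2-w^2)\,\partial_\nu\phi\,dS$, which does not vanish under \eqref{D}, and the identity $(v+w)\partial_\nu\phi=\partial_\nu(v-w)$ from \eqref{F1}--\eqref{F2} only converts it into $\tfrac14\int_{\partial U}\partial_\nu\bigl((v-w)^2\bigr)\,dS$ --- a quantity involving a normal derivative on the boundary that is not controlled by the $L^\infty_tL^2\cap L^2_tH^1$ norms you have available. Estimating the residue directly by traces gives at best something like $\|v\|_{L^2}^{1/2}\|v\|_{H^1}^{3/2}\|v-w\|_{L^2}$, which after Young's inequality leaves a superlinear term $\sim\|v,w\|_{L^2}^6$ on the right; Gronwall then yields only local existence with possible finite-time blow-up. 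This reflects the genuinely critical nature of the 2D drift nonlinearity: the paper closes the global $L^2$ bound (proposition \ref{fixptprop}) not by a direct energy estimate but by first proving dissipation of the full entropy functional (kinetic energy $+$ $L\log L$ entropy $+$ electrostatic energy), which gives \emph{uniform-in-time} bounds on $\|v,w\|_{L\log L}$ and $\|\nabla\phi\|_{L^2}$, and then invoking the Biler--Hebisch--Nadzieja inequality \eqref{bilerin}, whose $\epsilon\|v\|_{H^1}^2\|v\|_{L\log L}$ structure is what allows the drift term to be absorbed into the dissipation with a small constant. Nothing in your proposal supplies this mechanism.

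A second, related defect is the discretization itself. Projecting $v^N,w^N$ onto Neumann--Laplacian eigenfunctions destroys the maximum principle: spectral truncations of a positivity-preserving parabolic equation are not positivity preserving, so the nonnegativity you need both for the sign of the cubic term and for the entropy cannot be obtained ``via a standard maximum-principle argument'' at the Galerkin level; moreover the truncated densities do not satisfy \eqref{F1}--\eqref{F2}, so the boundary identity you lean on is unavailable for the approximations. This is exactly why the paper uses the modified Galerkin scheme of Lin--Liu: only the velocity is finite-dimensional, while $v,w$ solve the genuine parabolic system (constructed in lemma \ref{Llemma} by an implicit time discretization), the coupling being closed by a Schauder fixed point. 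That choice preserves positivity and the exact entropy identity, which are then the inputs to the global bound. Your limit passage, Aubin--Lions compactness, and Gronwall uniqueness outline are standard and consistent with the paper, but without the entropy/$L\log L$ step the a priori estimates they rest on are not available globally in time.
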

The definition of a global in time, weak solution will be given in the proof of theorem \ref{globalex}
at the end of section \ref{WEAKSOLUTIONS}.  The proof of theorem \ref{globalex} is based on 
a modified Galerkin procedure found in \cite{LiLi95}. 
Formally setting $u \equiv 0,$ reduces  (\ref{NS}-\ref{I})  to
the Debye-H\"uckel system. \cite{BiHeNa94} have shown that the Debye-H\"uckel system has 
a unique, global in time, weak solution under the assumptions on $U$ above.
It is not known whether global in time, weak solutions of the Debye-H\"uckel system
exist on general smooth, bounded domains in dimensions greater than 2.

The second result of this paper concerns the long term behavior of the solutions guaranteed
by theorem \ref{globalex}.
\begin{theorem}
\label{l2ass}
There exist a positive constant $\lambda$ depending only on $U$ and constant $C_{\dagger}$ depending only on $u_0, v_0$
and $w_0$ such that 
\begin{equation}
\label{l2assin}
 \|u(t)\|_{\mathsf{H}}^2 + \|v(t) - V \|_{L^2(U)}^2 + \|w(t) - W\|_{L^2(U)}^2 +  \|\phi - \Phi\|_{H^1(U)}^2\leq C_{\dagger} e^{-\lambda t}.
\end{equation} 
for all $t \in \mathbb{R}^+$ where $V, W, \Phi$ is the unique steady-state solutions.
\end{theorem}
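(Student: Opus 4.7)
The strategy is to characterize the unique stationary solution, derive an $L^2$ differential energy inequality for the perturbation from equilibrium, and close the inequality using Poincar\'e together with the uniform-in-time regularity furnished by theorem~\ref{globalex}. For the steady state, setting $u \equiv 0$ and demanding that the Nernst--Planck fluxes $\nabla V - V\nabla\Phi$ and $\nabla W + W\nabla\Phi$ vanish forces the Boltzmann form $V = Ae^{\Phi}$ and $W = Be^{-\Phi}$, with constants $A, B$ uniquely determined by the conserved masses $\int V = \int v_0$ and $\int W = \int w_0$. Substituting into Poisson yields the Poisson--Boltzmann equation $-\Delta\Phi + Ae^{\Phi} - Be^{-\Phi} = 0$ with $\Phi|_{\partial U} = 0$, which is the Euler--Lagrange equation of a strictly convex coercive functional on $H^1_0(U)$ and is hence uniquely solvable with smooth $V, W, \Phi$. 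Compatibility of $u \equiv 0$ with the Navier--Stokes equation requires $\Delta\Phi\,\nabla\Phi$ to be a gradient; this holds because $\nabla(V - W) = (V + W)\nabla\Phi$ is parallel to $\nabla\Phi$, so $\operatorname{curl}(\Delta\Phi\,\nabla\Phi) = 0$ and $\Delta\Phi\,\nabla\Phi = \nabla p^{*}$ for some stationary pressure.

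Write $\tilde v = v - V$, $\tilde w = w - W$, $\tilde\phi = \phi - \Phi$; mass conservation gives $\int\tilde v = \int\tilde w = 0$, and $\tilde\phi$ solves $\Delta\tilde\phi = \tilde v - \tilde w$ with $\tilde\phi|_{\partial U} = 0$. Subtracting the steady-state equations and testing against $(u, \tilde v, \tilde w)$, integrating by parts using $\nabla\cdot u = 0$, $u|_{\partial U} = 0$, the no-flux boundary condition (which kills the boundary term from IBP on the dissipative fluxes), and the cancellation $\int u \cdot \nabla p^{*} = 0$, one arrives at
\[
\frac{d}{dt}E + \|\nabla u\|_{L^2}^2 + \|\nabla\tilde v\|_{L^2}^2 + \|\nabla\tilde w\|_{L^2}^2 = \mathcal{R}, \qquad E = \tfrac12(\|u\|_{L^2}^2 + \|\tilde v\|_{L^2}^2 + \|\tilde w\|_{L^2}^2),
\]
where $\mathcal{R}$ collects bilinear terms with smooth steady-state coefficients (such as $\int u\cdot\tilde v\nabla V$, $\int V\nabla\tilde v\cdot\nabla\tilde\phi$, $\int\tilde v\nabla\tilde v\cdot\nabla\Phi$) and cubic self-interactions such as $\int u\cdot(\tilde v-\tilde w)\nabla\tilde\phi$ and $\int\tilde v\nabla\tilde v\cdot\nabla\tilde\phi$. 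Because $\tilde v, \tilde w$ have zero mean and $u$ vanishes on $\partial U$, Poincar\'e delivers $E \leq C(U)\bigl(\|\nabla u\|^2 + \|\nabla\tilde v\|^2 + \|\nabla\tilde w\|^2\bigr)$.

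The main obstacle is bounding $\mathcal{R}$ by a small fraction of the dissipation plus a multiple of $E$ while preserving the rate $\lambda = \lambda(U)$. For the bilinear pieces, the 2D elliptic estimate $\|\nabla\tilde\phi\|_{L^{\infty}} + \|\tilde\phi\|_{H^2} \leq C(U)\|\tilde v - \tilde w\|_{L^2}$ together with $L^{\infty}$ bounds on $V, W, \nabla\Phi$ (absorbed into $C_{\dagger}$) and Young's inequality suffice. For the cubic terms, the Gagliardo--Nirenberg inequality $\|f\|_{L^4}^2 \leq C(U)\|f\|_{L^2}\|f\|_{H^1}$ combined with the uniform-in-time $L^2$ bounds on $u, v, w$ from theorem~\ref{globalex} yields a bound of the same form. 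The most delicate point, namely preserving the exponential rate as a function of $U$ alone despite the $C_{\dagger}E$ contributions, likely requires either reformulating the dissipation via the relative free energy $\mathcal{F} = \tfrac12\|u\|^2 + \int\bigl(v\log(v/V) - (v-V)\bigr) + \int\bigl(w\log(w/W) - (w-W)\bigr) + \tfrac12\|\nabla\tilde\phi\|^2$ (whose dissipation is the sign-definite quantity $-\|\nabla u\|^2 - \int v|\nabla\log(v/V) - \nabla\tilde\phi|^2 - \int w|\nabla\log(w/W) + \nabla\tilde\phi|^2$), or a bootstrap past an initial transient in which parabolic smoothing renders the troublesome coefficients harmless. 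Once $\tfrac{dE}{dt} + \lambda E \leq 0$ is established, Gr\"onwall gives $E(t) \leq C_{\dagger}e^{-\lambda t}$, and the $H^1$ decay of $\phi - \Phi$ follows at once from $\|\tilde\phi\|_{H^1} \leq C(U)\|\tilde v - \tilde w\|_{L^2}$.
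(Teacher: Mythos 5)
Your overall architecture (unique Poisson--Boltzmann steady state, a perturbation energy, a differential inequality closed by Poincar\'e and Gr\"onwall) matches the paper's in outline, but there is a genuine gap at exactly the point you flag as ``the most delicate point,'' and you do not close it. The differential inequality you can actually derive is not $\frac{d}{dt}E + \lambda E \leq 0$ but something of the form $\frac{d}{dt}\mathscr{L} \leq -C_1\mathscr{L} + C_2\mathscr{L}^2 + C_3\mathscr{L}^4$ (this is the paper's Lemma \ref{Ldiff}): the cubic self-interactions, estimated by Gagliardo--Nirenberg as you propose, produce a term of size $(\mathrm{const})\,E\times(\mathrm{dissipation})$, which can be absorbed into the dissipation only when $E$ is \emph{small}, not merely bounded by the global $L^2$ bound $M_1$ from Theorem \ref{globalex}. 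So exponential decay follows only after a time at which $\mathscr{L}$ has become small, and nothing in your argument produces such a time. The paper supplies the missing step in two pieces: Lemma \ref{Kdecay} shows the entropy $\mathscr{K}$ (your relative free energy $\mathcal{F}$, up to constants) is nonincreasing with a time-integrable, sign-definite dissipation, and Lemma \ref{subseq} (following theorem 6 of \cite{BiHeNa94}, plus a compactness argument forcing $u(t_j)\to 0$ in $\mathsf{H}$) extracts a sequence $t_j$ along which $v\to V$, $w\to W$, $\nabla\phi\to\nabla\Phi$ in $L^2$, hence $\mathscr{L}(t_j)\to 0$. Only then does the polynomial inequality self-improve: once $C_2\mathscr{L}^2+C_3\mathscr{L}^4\leq \tfrac{C_1}{2}\mathscr{L}$ at some $t_j$, the inequality propagates and gives $\mathscr{L}(t)\leq \mathscr{L}(t_j)e^{-C_1(t-t_j)/2}$ for $t\geq t_j$. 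You name the relative free energy as a possible remedy but do not explain how its decay yields smallness of $E$ at a finite time; that is the actual content of the paper's Lemma \ref{subseq} and it cannot be omitted.

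A secondary issue: your energy $E$ is the unweighted $\tfrac12(\|u\|^2+\|\tilde v\|^2+\|\tilde w\|^2)$, whereas the paper weights by the equilibrium, $\mathscr{L}=\int \tfrac{\Theta}{2}|u|^2+\tfrac12(v-V)^2/V+\tfrac12(w-W)^2/W+|\nabla(\phi-\Phi)|^2$. This is not cosmetic. With the unweighted energy the bilinear term $\int \tilde v\,\nabla\Phi\cdot\nabla\tilde v$ integrates by parts to $-\tfrac12\int\tilde v^2\Delta\Phi$, a sign-indefinite term of size $(\mathrm{const})E$ with a constant depending on $\|V\|_{\sup},\|W\|_{\sup}$ that there is no reason to expect the ordinary Poincar\'e constant to beat. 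The weighted energy absorbs the equilibrium drift into the combination $\nabla(E/V)$, so the leading dissipation is $\int V|\nabla(E/V)|^2$, which is made coercive over mean-zero perturbations by the weighted Poincar\'e inequality of Lemma \ref{wpi}. You would need either this weighting or some substitute for it to make even the linearized part of your estimate close.
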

The definition of the steady state solution and stationary solution are given in section \ref{globalbehave}.
\cite{BiHeNa94} have shown that solutions of the Debye-H\"uckel system 
converge to the steady state solution in the $L^2$ norm in two and three dimensions provided the $L^2$ norm of solutions
is bounded in time.
Later, \cite{BiDo00} proved that solutions of the Debye-H\"uckel system   on uniformly convex domains in arbitrary dimensions
converge exponentially to the steady state solution,  provided the solutions  are defined globally in time.
With slight modifications, the proof of theorem \ref{l2ass} presented in this paper 
implies the exponential convergence of solutions of the Debye-H\"uckel system to the steady state solution in 
two and three dimensions without the assumption of uniform convexity on the domain.  See corollary \ref{nonuniform} 
at the end of  section \ref{globalbehave}.

\begin{remark}[Some generalizations]
In the forthcoming theory,  the Laplacian in \eqref{poisson} may be replaced by 
any operator $\mathscr{Q}$ associated with a convex quadratic form $Q(\cdot,\cdot)$ which 
is coercive over $H^1_0(U)$ and satisfies estimates \eqref{ellipin1} and \eqref{ellipin2} when $\Delta^{-1}$ 
is replaced by $\mathscr{Q}^{-1}.$
In general, the system may include several density functions $v_1, \dots, v_s$
with valences $\mu_1, \dots, \mu_s.$
Associated with such a  parabolic-elliptic system is the \emph{entropy}
\begin{equation*}
\mathscr{E}(v_1, \dots, v_s) = \int_{\Omega}
v_1 \log v_1 + \dots v_s \log v_s \,dx 
\end{equation*}
and the \emph{interaction energy}
\begin{equation*}
\mathscr{F}_Q(v_1, \dots, v_s) = 
\min_{\phi \in C^{\infty}_0(\Omega)}\left( Q(\phi, \phi) - \int_{\Omega} \phi (\mu_1 v_1 + \dots
+  \mu_s v_s)\,dx\right).
\end{equation*}
The \emph{kinetic energy} is 
\begin{equation*}
\mathscr{G}(u) = \int_U \frac{1}{2} |u|^2\,dx.
\end{equation*}
 The \emph{entropy functional} 
is\begin{equation*}
\mathscr{H}_Q(u, v_1, \dots, v_s)  = \mathscr{E}(v_1, \dots, v_s) - \mathscr{F}_Q(v_1, \dots, v_s) +  \mathscr{G}(u).
\end{equation*}
If $u$ is the motion of an incompressible, Newtonian fluid coupled with charge densities $v_1 \dots v_s$ 
with interaction energy $\mathscr{F}_Q$ then $\mathscr{H}_Q(u(t), v_1(t), \dots, v_s(t))$ is nonincreasing in $t.$
In this paper $s=2,$ $Q(v,v) = |\nabla v|^2/2,$ $\mu_1 = -1, v_1 = v, \mu_2 = 1, v_2 = w.$ 
\end{remark}

\subsection*{Notation}
The $L^p(U)$ norm will be denoted $\|\cdot \|_{L^p(U)}$ while the Sobolev norm of $W^{1,2}(U)$ will be denoted $\|\cdot\|_{H^1(U)}.$
The norm of a vector in Euclidean space will be denoted by $|\cdot|.$  We say a measurable function $f$  is \emph{nonnegative} if $f(x) \geq 0$ for a.e. $x \in U.$
$\|\cdot\|_{L \log L(U)}$ will denote the integral of $f\log f$ for a measurable, nonneagative function $f$
provided that integral exists.   
See Chapter 3 of \cite{TEMAM} for the defintion and properties of Banach space $X_0$ valued functions
$f$ in $L^p(E;X)$ with derivative $f'$ in $L^q(E; X')$ for $E$ an open subset of $\mathbb{R}$ 
and $1 \leq p,q \leq \infty.$
If $f' \in L^q(E; X')$  then we take $f$ to be the continuous representative
of its equivalence class.

In certain instances, it will be convenient to write the sum of the norm of two functions as follows;
\begin{equation*}
\|u,v\|^r_X = \|u\|^r_X + \|v\|^r_X
\end{equation*}
with $r = p$ when $X = L^p(U), H^1(U)$ or $L^p(E;X)$  and $r = 1$ when $X = L^1(U)$ or $L\log L(U).$

Let $\Delta^{-1}: L^2(U) \rightarrow H^2(U)\cap H^1_0(U)$ be defined by $\Delta^{-1} v = \phi$ provided $\Delta \phi = v$ and $\phi \in H^1_0(U).$

A note on constants: we will use $(\mathrm{const})$ to denote an inessential constant which may change from line to line.
The letter $C$ with various sub and superscripts will denote a constant refered to in various parts of the paper while 
constants  $c_1, c_2, c_3, c_{\Delta}$ or $c_{\Delta}'$ defined below will be written to  
indicate which inequality was used in that line. 

Constants $c_1, c_2$ and $c_3$ appear in the following 
versions of the Nash, Poincar\'e and Sobolev inequalities  
resp. in two dimensions (see \cite{LIEBLOSS});
\begin{equation*}
\begin{aligned}
\|v\|_{L^2(U)}^2 &\leq c_1  \|v\|_{H^1(U)}\|v\|_{L^1(U)}, \quad \forall v \in C^{\infty}(U),\\
\|v\|_{L^2(U)}^2 &\leq c_2  \| \nabla v\|_{L^2(U)}^2, \quad \forall v \in C^{\infty}_{\mathrm{c}}(U),\\
\|v\|_{L^3(U)} &\leq c_3 \|v\|^{1/2}_{H^1(U)}\|v\|^{1/2}_{L^2(U)}, \quad \forall v \in C^{\infty}(U).
\end{aligned}
\end{equation*}
Constants $c_{\Delta}$ and $c_{\Delta}'$ stemming  from regularity of solutions
of the Poisson equation on domains with $C^{1,1}$ boundary (see \cite{GILBARG}) will also be useful
\begin{equation}
\begin{aligned}
\label{ellipin1}
\|\Delta^{-1}v\|_{H^1(U)} &\leq c_{\Delta} \| v\|_{L^2(U)} ,\quad \forall v \in C_{\mathrm{c}}^{\infty}(U), \\
\|\nabla \Delta^{-1} v\|_{L^6} &\leq c_{\Delta}'  \| v\|_{L^3(U)}^{1/2}  \|\nabla \Delta^{-1} v\|_{L^2(U)}^{1/2}, \quad \forall v \in C_{\mathrm{c}}^{\infty}(U).
\end{aligned}\end{equation}
Finally, for $\epsilon > 0,$   \cite{BiHeNa94} have shown that there exists $C_{\epsilon} > 0$ depending only on $U$
\begin{equation}
\label{bilerin}
\|v\|_{L^3}^3 \leq \epsilon \|v\|_{H^1(U)}^2 \|v\|_{L\log L(U)} + C_{\epsilon} \|v\|_{L^1(U)},\quad  \forall v \in C^{\infty}(U). 
\end{equation}
provided $U$ is an open subset of $\mathbb{R}^2$ with $C^{1,1}$ boundary.

The space of smooth, compactly supported, divergence free vector fields is 
\begin{equation*}
\mathscr{V}(U) = \left\{ v \in (C^{\infty}_c(U))^2: \nabla \cdot v  = 0\right\}.
\end{equation*}
$\mathsf{H}(U)$ is the completion of $\mathscr{V}(U)$ is the completion of $\mathscr{V}(U)$ in the $L^2$-topology 
and $\mathsf{V}(U)$ is the completion of $\mathscr{V}(U)$ in the $H^1$ topology.
$\{U_i\}_{i=1}^{\infty}$ is the $L^2$-orthonormal basis of $\mathsf{H}(U)$ of the first component of eigenfunctions of
the Stokes operator with eigenvalues $\{\lambda_i\}_{i=1}^{\infty}.$ 
The dual space of $\mathsf{V}(U)$ is $\mathsf{V}'(U).$
See Chapter 1 of \cite{TEMAM} concerning these definition.

The constants $\mu_{v}, \mu_w, M_0 , R_0, R_0'$ and $S_0,$ which depend only on $u_0, v_0$ and $w_0$ 
will later be important;
\begin{equation*}
\begin{aligned}
\mu_v &= \int_U v_0\,dx, \\ \mu_w &= \int_U w_0 \,dx, \\
 M_0 &= \|u_0\|_{\mathsf{H}(U)}^2, \\
R_0 &= \max\left\{\|v_0\|_{L^2(U)}^2, \|w_0\|_{L^2(U)}^2 \right\}, \\
R'_0 &= \max \left\{\|v_0\|_{L \log L (U)},\|w_0\|_{L \log L (U)}\right\},\\
S_0 &= \|\nabla \Delta^{-1}(v_0 - w_0)\|_{L^2(U)}^2
\end{aligned}
\end{equation*}
are assumed to be finite.

\section{Weak Solutions}
\label{WEAKSOLUTIONS}
In this section $\nu$ is a positive integer and $t_0$ is a positive real number.

$B_1 \subset L^4([0,t_0]; L^4(U))$ is the ball of radius $R$ 
and $B_2 \subset C([0,t_0]; \mathbb{R}^{\nu})$ the ball of radius $M$
in their respective topologies.   

Define $\mathfrak{i}: C([0,t_0]; \mathbb{R}^{\nu}) \rightarrow C([0,t_0]; \mathscr{V}(U))$ 
by 
\begin{equation}
\mathfrak{i}(a) = \sum_{i=1}^{\nu} a_i U_i
\end{equation}
 and define $\mathfrak{j}: B_1 \times B_1 \rightarrow B_1\times B_1 \times B_1$ by
\begin{equation}
\mathfrak{j}(v,w) = (v,w, \nabla\Delta^{-1}(v-w)).
\end{equation}
Note that 
$\mathfrak{i}$ is an isometry from $C([0,t_0]; \mathbb{R}^{\nu})$ into $C([0,t_0]; \mathsf{H}(U)).$

Define two operators 
\begin{gather*}
\mathscr{X}: B_1 \times B_1 \times B_1 \rightarrow B_2\\
\mathscr{Y}: B_2 \rightarrow B_1 \times B_1 
\end{gather*}
as follows;  
$\mathscr{X}(v,w,e) = a$ provided $a$ is a solution to the $\nu$ dimensional system of 
ordinary differential equations
\begin{gather}
\label{ode}
a'_i + \lambda_i a_i + \sum_{j,k=1}^{\nu} a_j a_k \int_U U_j \cdot  \nabla U_k \cdot U_i \,dx  
= \int_U (v-w) e \cdot \nabla U_i\,dx\\
\label{odeinit}
a_i(0) = \int_U u_0 U_i\,dx, 
\mbox{ for all } i = 1,\dots, \nu \mbox{ and } t \in [0,t_0]
\end{gather}
and $\mathscr{Y}(a) = (v,w)$ provided $(v,w)$ is a weak solution of
\begin{gather*}
\frac{\partial v}{\partial t} +  (\mathfrak{i}(a))\cdot \nabla v   =\nabla \cdot \left(\nabla v - v \nabla \phi\right)\\
\frac{\partial w}{\partial t} + (\mathfrak{i}(a))\cdot \nabla w  = \nabla \cdot \left(\nabla w + w\nabla \phi \right)\\
\Delta \phi = v - w, \mbox{ for } t \in [0,t_0],\\
v(0,\cdot ) = v_0(\cdot), \quad w(0,\cdot) = w_0(\cdot).
\end{gather*}
Finally define 
\begin{equation*}
\mathscr{Z} = \mathscr{X} \circ \mathfrak{j} \circ \mathscr{Y}.
\end{equation*}

Let $C_*$ be the constant (which depends only on $U$) specified in lemma \ref{Llemma}
and $[0,t')$ be the interval of existence of the equation $f' = C_*f^3, f(0) = \max\{R_0,1\}.$
Let $C_0$ be the constant (which depends only on $R_0$ and $C_*$) specified 
in lemma \ref{Llemma}.  Let 
$R = C_0^{3/8}.$  Fix $0 < t^* < t'$ and let
\begin{equation*}
M^2 = (M_0 + R^4)e^{t^*}.
\end{equation*}
For these choices of $M, R$ and  $t^*,$ which depend only on the initial data and the domain, it will be shown that $\mathscr{Z}$ has 
a fixed point $a_{\nu}$ when $t_0 = t^*.$ 
Then, it will be shown  that there is a constant $M_1$ depending only on the initial data and the domain,
but independent of $M,R$ and $t_0$ such that any functions $a_{\nu}, v_{\nu}$ and $w_{\nu}$ corresponding to 
a fixed point of $\mathscr{Z}$ are bounded in the $l^2$ and $L^2$ norms resp.

N.B. in the proof of proposition \ref{extapprox} we will make a slight abuse of notation 
by assuming $u_0, v_0, w_0$ used in the definitions above are not necessarily the same 
as those functions given in the introduction, resulting in possibly different $M,R$ and $t_0.$


\begin{lemma}
$\mathscr{X}$ is well defined, continuous and $\mathscr{X}(B_1)$ is precompact in $C([0,t_0]; \mathbb{R}^{\nu}).$
\end{lemma}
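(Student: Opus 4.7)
The plan is to treat the three claims by finite-dimensional ODE arguments combined with an energy estimate that exploits the antisymmetry of the Navier--Stokes trilinear form on the Galerkin basis.

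For well-definedness, first observe that the right-hand side of \eqref{ode} is polynomial in $a$ with measurable time dependence through the forcing $t\mapsto\int_U(v-w)(e\cdot\nabla U_i)\,dx$. Since $v,w,e$ lie in $L^4([0,t_0];L^4(U))$, two applications of Cauchy--Schwarz give $\|(v-w)e\|_{L^2([0,t_0];L^2(U))}\leq 2R^2$, so the forcing is in $L^2(0,t_0)$; a Caratheodory argument yields a maximal absolutely continuous local solution. To extend it to $[0,t_0]$ and place $a$ in $B_2$, I would multiply \eqref{ode} by $a_i$, sum over $i$, and use the cancellation $\sum_{i,j,k}a_ia_ja_k\int_U U_j\cdot\nabla U_k\cdot U_i\,dx=0$, which follows from $\nabla\cdot U_j=0$, $U_j|_{\partial U}=0$, and integration by parts (equivalently, the standard identity $b(u,u,u)=0$ on $\mathsf{V}(U)$). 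Absorbing the forcing via Young's inequality and applying Gronwall yields a bound of the form $|a(t)|^2\leq (M_0+CR^4)e^{t_0}$; the calibrated choice $M^2=(M_0+R^4)e^{t^*}$ with $t_0=t^*$ then places $a$ in $B_2$ and precludes finite-time blowup.

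Continuity is obtained by differencing: given inputs $(v_n,w_n,e_n)\to(v,w,e)$ in $(L^4([0,t_0];L^4(U)))^3$, subtract the two instances of \eqref{ode} for the corresponding solutions $a_n$ and $a$. The cubic-term difference is polynomial in $a_n-a$ and $a_n+a$ with $B_2$-bounded coefficients, the linear term is trivial, and the forcing difference tends to $0$ in $L^2(0,t_0)$ because $(v,w,e)\mapsto(v-w)e$ is continuous from $(L^4_{t,x})^3$ into $L^2_{t,x}$. Gronwall on $|a_n-a|^2$ then gives uniform convergence on $[0,t_0]$.

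For precompactness of $\mathscr{X}(B_1\times B_1\times B_1)$, I would invoke Arzela--Ascoli. Uniform boundedness is the content of the first step. For equicontinuity, integrating \eqref{ode} from $s$ to $t$ gives a pointwise contribution of $C(\nu,M)(t-s)$ from the linear and cubic terms, plus a forcing contribution of at most $C(\nu)R^2(t-s)^{1/2}$ via Cauchy--Schwarz against $\|(v-w)e\|_{L^2_{t,x}}$, all uniformly in the input. Thus every trajectory in the image is uniformly $\frac{1}{2}$-Holder in time and Arzela--Ascoli applies. The only real subtlety is verifying the trilinear cancellation on the Galerkin basis, but this is standard for Stokes eigenfunctions; the remaining work is bookkeeping once $M$, $R$, and $t^*$ have been calibrated as in the setup preceding the lemma.
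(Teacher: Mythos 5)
Your proposal is correct and follows essentially the same route as the paper: local existence for the finite-dimensional ODE system, an energy estimate (using the vanishing of the trilinear term $\sum_{i,j,k}a_ia_ja_k\int_U U_j\cdot\nabla U_k\cdot U_i\,dx$) plus Gronwall to extend to $[0,t_0]$ and land in $B_2$, a differencing-plus-Gronwall argument for continuity, and uniform $1/2$-H\"older equicontinuity in time followed by Arzel\`a--Ascoli for precompactness. The only cosmetic difference is that you make the trilinear cancellation explicit and invoke Carath\'eodory rather than Peano; the substance is identical.
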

\begin{proof}
\emph{(Well Definedness)}
Let $(v,w,e) \in B_1.$  The system \eqref{ode}, \eqref{odeinit} is a finite dimensional system of ordinary differential equations
with continuous dependence on $t$ and $a.$  By Peano's theorem, there exists
$\epsilon > 0$ so that $a$ solves \eqref{ode}, \eqref{odeinit} for $t \in [0,\epsilon].$
We no show that $a$ extends to $[0,t_0].$
 Multiplying \eqref{ode} by $a_i$ and summing over $i = 1,\dots, \nu$ we find that 
\begin{equation*}
\begin{aligned}
\sum_{i=1}^{\nu} \frac{1}{2}\frac{d}{dt} a_i^2 + \lambda_i a_i^2 &=  \sum_{i=1}^{\nu} a_i \int_U  (v-w) e\cdot  U_i \,dx\\
&\leq \left( \int_U |(v-w)e|^2 \,dx\right)^{1/2}\left( \int_U \big| \sum_{i=1}^{\nu} a_i U_i\big|^2 \,dx\right)^{1/2}\\
&\leq  \frac{1}{4}\int_U (v-w)^4 \,dx +  \frac{1}{4} \int_U |e|^4 \,dx + \frac{1}{2}\sum_{i=1}^{\nu} a_i^2.
\end{aligned}
\end{equation*} 
Letting $\omega(t) = \sum_{i=1}^{\nu} a_i^2(t)$ we see that $\omega$ satisfies
the differential inequality $\omega ' \leq f(t) + \omega$ with $\omega(0) \leq M_0$ and 
$\int_0^{t_0} f(s)\,ds \leq R^4.$  We infer that $\omega(t)$ is majorized by the function
$(M_0+R^4)e^t$ on the interval $[0,t_0]$ so that 
\begin{equation*}
\sum_{i=1}^{\nu} a_i^2(t) \leq (M_0+R^4)e^{t_0} = M^2, \mbox{ for all } t\in [0,t_0].
\end{equation*}
Hence $a \in B_2$ for all $t = [0,t_0].$

\emph{(Continuity)}
Since $\mathscr{X}$is well defined, $a, \bar a \in B_2$ provided $(v,w,e), (\bar v, \bar w, \bar e) \in B_1.$   Thus $|a_i(t)|, |\bar a_i(t)| \leq \sqrt{M}$ for all $t \in [0,t_0]$ and all $i = 1, \dots, \nu.$
Let 
\begin{equation*}
K_{\nu} = \max_{i,j,k = 1,\dots, \nu}  \max_{x \in \mathrm{Clos}(U)}  \left | U_i \cdot \nabla U_j \cdot U_k \right|.
\end{equation*}
Subtracting the equations solved by $a$ and $\bar a$ resp. from each other, multiplying by $a_i - \bar a_i$ and
summing over $i = 1,\dots, \nu$ we find that
\begin{equation*}
\begin{aligned}
&\sum_{i=1}^{\nu} \frac{1}{2} \frac{d}{dt}(a_i - \bar a_i)^2 + \lambda_i (a_i - \bar a_i)^2 \\
&= 
\int_U \sum_{i,j,k=1}^{\nu} \sum_{l,m=1}^2 (a_i a_j - \bar a_i \bar a_j) (a_k  - \bar a_k)   U_i \cdot \nabla U_j \cdot U_k \,dx \\
& + \int_U \sum_{i=1}^{\nu}  \left((v-w) e- (\bar v - \bar w)  \bar e \right)(a_i - \bar a_i)U_i \,dx\\
&\leq 2 \sqrt{\nu M}  K_{\nu} \sum_{i=1}^{\nu} (a_i - \bar a_i)^2 \\
& + \left( \int_U |(v-w) e  - (\bar v - \bar w)  \bar e|^2 \,dx\right)^{1/2}\left( \int_U \big| \sum_{i=1}^{\nu} (a_i - \bar a_i) U_i\big|^2 \,dx\right)^{1/2}\\
&\leq  (\mathrm{const}) \sum_{i=1}^{\nu} (a_i - \bar a_i)^2 + \frac{1}{2}\int_U |(v-w) e  - (\bar v - \bar w)  \bar e|^2 \,dx.
\end{aligned}
\end{equation*}
Letting $\eta(t) = \sum_{i=1}^{\nu} (a_i(t) - \bar a_i(t))^2$ we see that $\eta$ satisfies the differential inequality
$\eta' \leq (\mathrm{conts}) \eta + g(t)$ with $\eta(0) = 0.$   We have then for all $t \in [0,t_0]$  $|\eta (t)|\leq e^{(\mathrm{const} )t_0} \int_0^{t_0} |g(s)|\,dx$ 
where $\int_0^{t_0} |g(s)| \,ds $  converges to zero as $v,w,e$ converges
to $\bar v, \bar w,$ $\bar e$ in $ L^4([0,t_0]; L^4).$
 
\emph{(Compactness)}
 Fix $i  \in \{1,\dots,\nu\}$ and integrate \eqref{ode} over $[s,t] \subset [0,t_0]$ to find that
 \begin{equation*}
\begin{aligned}
&|a_i(s) - a_i(t)| \\
&= \left|  \int_s^t    \int_U \sum_{j,k=1}^{\nu}  a_j(r) a_k(r)    U_j  \cdot \nabla U_k \cdot U_i 
 + (v-w)e \cdot U_i \,dx -   \lambda_i a_i(r)  \,dr \right|\\
&\leq (\nu^2 M^2 K_{\nu} + \max_{i=1,\dots, \nu} \lambda_i M)(t-s)  +R^2 \sqrt{t-s} \leq K' \sqrt{t-s}
\end{aligned}
 \end{equation*}
 where $K ' =(\nu^2 M^2 K_{\nu} + \max_{i=1,\dots, \nu} \lambda_i M)  +R^2$ provided $|t-s| \leq 1.$
For $\epsilon > 0,$ choosing $\delta  = \min\{1, \epsilon^2/(\nu K')\}$ will show that $|a(s) - a(t)| < \epsilon$
for all $|t - s| < \delta$ and all $v,w,e$ in $B_1.$
Consequently the image of $B_1$ is uniformly equicontinuous in $B_2.$ The compactness asserted by the  lemma follows from
the  Arzela - Ascoli theorem.
 \end{proof}
 \begin{lemma} 
 \label{Llemma}
 $\mathscr{Y}$ is well defined and continuous.
 \end{lemma}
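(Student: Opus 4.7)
The plan is to treat $a\in B_2$ as giving a fixed, divergence free, smooth velocity $\mathfrak{i}(a)$ and then construct a weak solution $(v,w)$ of the resulting parabolic–elliptic Nernst–Planck system, verify that it lies in $B_1\times B_1$, and finally show $(v,w)$ depends continuously on $a$.

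For existence I would use either a Galerkin scheme in the $v,w$ variables or the following linearising iteration: given $(\tilde v,\tilde w)$, set $\tilde\phi=\Delta^{-1}(\tilde v-\tilde w)$ and solve the pair of linear advection–diffusion equations for new $(v,w)$ with advective fields $\mathfrak{i}(a)-\nabla\tilde\phi$ and $\mathfrak{i}(a)+\nabla\tilde\phi$ (these are standard because $\mathfrak{i}(a)$ is as smooth in $x$ as we wish and is merely continuous in $t$). A contraction estimate on a short time interval, followed by continuation up to $[0,t_0]$ using the a priori bounds below, produces a solution; nonnegativity and conservation of mass $\mu_v,\mu_w$ follow from testing against $(v\wedge 0)$ and the constant $1$ respectively, using that $\mathfrak{i}(a)$ is divergence free and satisfies the no-slip condition. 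The a priori $L^2$ estimate is the heart of the proof: test the $v$ and $w$ equations with $v$ and $w$ respectively, noting that the convective term vanishes, and use $\Delta\phi=v-w$ to rewrite the electrostatic term as
\begin{equation*}
\tfrac12\tfrac{d}{dt}\bigl(\|v\|_{L^2}^2+\|w\|_{L^2}^2\bigr)+\|\nabla v\|_{L^2}^2+\|\nabla w\|_{L^2}^2=\tfrac12\!\int_U(v-w)(w^2-v^2)\,dx.
\end{equation*}
Bounding the right-hand side via $\|v,w\|_{L^3}^3$, applying the Sobolev inequality $c_3$ together with Young's inequality in the form $\|v\|_{L^3}^3\le c_3^3\|v\|_{H^1}^{3/2}\|v\|_{L^2}^{3/2}\le \tfrac12\|\nabla v\|_{L^2}^2+C_\ast\|v\|_{L^2}^6$ (using Poincaré carefully or the small-$\epsilon$ version of \eqref{bilerin}), absorbing the gradient term on the left, and identifying the resulting cubic differential inequality for $f=\|v\|_{L^2}^2+\|w\|_{L^2}^2$ with the ODE in the statement, one obtains the constant $C_\ast$ and sees that $f$ stays bounded on $[0,t')$ by some $C_0$.

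The $L^4([0,t_0];L^4(U))$ bound then follows from the two-dimensional Ladyzhenskaya interpolation $\|v\|_{L^4}^4\le (\mathrm{const})\|v\|_{L^2}^2\|\nabla v\|_{L^2}^2$: integrating in time and inserting the $L^\infty_t L^2_x$ bound $C_0$ and the $L^2_t H^1_x$ bound produced by the same energy identity gives $\|v\|_{L^4_tL^4_x}^4\le(\mathrm{const})\,C_0^{?}$, which is exactly what is packaged into $R=C_0^{3/8}$. Thus $\mathscr{Y}(a)\in B_1\times B_1$. Uniqueness, and afterwards continuity, are obtained by an energy argument on the difference: if $(v,w)$ and $(\bar v,\bar w)$ correspond to $a$ and $\bar a$, testing the difference equations with $v-\bar v$ and $w-\bar w$, using $\Delta(\phi-\bar\phi)=(v-\bar v)-(w-\bar w)$ and the Sobolev estimates for $\nabla\Delta^{-1}$ in \eqref{ellipin1}, produces a Gronwall inequality of the form
\begin{equation*}
\tfrac{d}{dt}\bigl(\|v-\bar v\|_{L^2}^2+\|w-\bar w\|_{L^2}^2\bigr)\le A(t)\bigl(\|v-\bar v\|_{L^2}^2+\|w-\bar w\|_{L^2}^2\bigr)+B\|a-\bar a\|_{C([0,t_0];\mathbb R^\nu)}^2,
\end{equation*}
where $A\in L^1(0,t_0)$ is controlled by the already-proved $L^4_{t,x}$ bounds on $v,w,\bar v,\bar w$. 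Uniqueness is the case $a=\bar a$; letting $a\to\bar a$ gives continuity.

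The main obstacles I expect are two. First, tracking the precise constants so that the a priori estimate really produces the cubic ODE $f'=C_\ast f^3$ and a bound compatible with $R=C_0^{3/8}$; this is purely bookkeeping but is the step where the choices of $R$, $t^\ast$ and $M$ in the preamble become self-consistent. Second, absorbing the dangerous term $\int v\nabla v\cdot\nabla\phi$ (and its analogue for $w$): without using nonnegativity one must split into $L^3$ pieces and handle them with the two-dimensional Sobolev–Ladyzhenskaya inequalities and the elliptic estimate \eqref{ellipin1} on $\nabla\Delta^{-1}$, being careful that the constants depend only on $U$ and on $R_0$, not on $\nu$, $t_0$ or $M$.
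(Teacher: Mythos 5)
Your overall strategy is sound and your construction route differs from the paper's: you build $(v,w)$ by a Galerkin scheme or a linearized contraction argument, whereas the paper uses an implicit time discretization (a Rothe scheme with step $h=t_0/N$), derives the discrete analogue $\omega_i+h\mu_i\leq\omega_{i-1}+h(\mathrm{const})\,\omega_i(\omega_{i-1}+\omega_{i-1}^2)$ of your cubic differential inequality, majorizes it by the solution of $f'=C_*f^3$, and passes to the limit $h\downarrow 0$ in the weak formulation. Either construction is acceptable, and your identification of the $L^4_tL^4_x$ bound with $R=C_0^{3/8}$ via interpolation is exactly what the paper does in \eqref{weaksolbnd}.

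There is, however, one genuine error in your energy estimate. Your identity
\begin{equation*}
\tfrac12\tfrac{d}{dt}\bigl(\|v\|_{L^2}^2+\|w\|_{L^2}^2\bigr)+\|\nabla v\|_{L^2}^2+\|\nabla w\|_{L^2}^2=\tfrac12\int_U(v-w)(w^2-v^2)\,dx
\end{equation*}
is obtained by writing $\int_U v\nabla\phi\cdot\nabla v\,dx=\tfrac12\int_U\nabla\phi\cdot\nabla(v^2)\,dx$ and integrating by parts a second time to replace $\nabla\phi$ by $\Delta\phi=v-w$. That second integration by parts produces the boundary term $\tfrac12\int_{\partial U}(v^2-w^2)\,\partial\phi/\partial\nu\,dS$, which does \emph{not} vanish: the potential satisfies the Dirichlet condition \eqref{D}, not a Neumann condition, so $\partial\phi/\partial\nu$ is in general nonzero on $\partial U$ (only the full fluxes $\nabla v-v\nabla\phi$ and $\nabla w+w\nabla\phi$ have vanishing normal component, by \eqref{F1}--\eqref{F2}). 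This boundary term is not controlled by your argument, so the stated identity is false as written. The paper sidesteps this precisely by \emph{not} performing the second integration by parts: it bounds $\int_U v\nabla\phi\cdot\nabla v\,dx\leq\|\nabla v\|_{L^2}\|v\|_{L^3}\|\nabla\phi\|_{L^6}$ and then uses the elliptic estimates \eqref{ellipin1} together with the Sobolev inequality to close the same cubic inequality $f'\leq C_*f^3$. Your proof is repaired by replacing the symmetrization step with this direct H\"older--elliptic estimate; the rest of your argument (the $L^4$ interpolation, and the Gronwall estimate on differences giving uniqueness and continuity) then goes through and matches the paper's conclusion.
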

\begin{proof}
Let $a \in B_2$ and write $u = \mathfrak{i}(a).$
Let $N \in \mathbb{Z}^+$ to be chosen below and  $h = t_0/N.$  Define a sequence of functions in $H^1(U)$ as follows; 
for $i = \mathbb{Z}^+$ define $u^i = u|_{t = ih}$ let $v^i$ and $w^i$ solve 
\begin{gather*}
v^i = \left(I + h\Delta + hu^i \cdot \nabla  - h\nabla \cdot \left(v^{i}\nabla \phi^{i-1}\right)  \right)^{-1}
v^{i-1}\\
w^i = \left(I + h\Delta + hu^i \cdot \nabla + h\nabla \cdot \left(w^{i}\nabla \phi^{i-1}\right)\right)^{-1}
w^{i-1} \\
\Delta \phi_i = v_i - w_i
\end{gather*}
with and for $i = 0$ let $v^0 = v_0, w^0 = w_0.$
The sequence is well defined by the Lax-Milgram theorem for example, and in fact satisfies
\begin{equation*}
\begin{aligned}
\|v_i, w_i\|_2^2 &+ h\|Dv_i, Dw_i\|_2^2\\
 =& \int_U v^{i-1}v^i + w^{i-1} w^i\,dx  + h \int_U \nabla v^i \cdot \nabla \phi^{i-1} \nabla v^i 
- \nabla w^i \cdot \nabla \phi^{i-1} \nabla w^i  \,dx\\
=& I + II.
\end{aligned}
\end{equation*}
We have by H\"older's inequality
\begin{equation*}
I \leq \frac{1}{2}\|v^i,w^i\|_2^2 + \frac{1}{2}\|v^{i-1},w^{i-1}\|_2^2
\end{equation*}
and 
\begin{equation*}
\begin{aligned}
II &\leq \frac{h}{2}\|\nabla v^i,\nabla w^i\|_2^2 + \frac{h}{2}(\|v^i\|_3^2 +\|w^i\|_3^2)\|\nabla \phi^{i-1}\|_6^2\\
&\leq\frac{h}{2}\|\nabla v^i,\nabla w^i\|_2^2 + \frac{hc_{\Delta}c_3}{2}(\|v^i\|_{1,2}\|v^i\|_2 +\|w^i\|_{1,2}\|w^i\|_2)\|v_{i-1},w_{i-1}\|_2^2\\
&\leq \frac{3h}{4}\|\nabla v^i,\nabla w^i\|_2^2 + h(\mathrm{const})\|v^i,w^i\|_2^2(\|v_{i-1},w_{i-1}\|_2^4 + \|v_{i-1},w_{i-1}\|_2^2).
 \end{aligned}
\end{equation*}
If $\omega_i = \|v^i, w^i\|_2^2$ and $\mu_i =  \|\nabla v^i, \nabla w^i\|_2^2$ then
\begin{equation}
\label{polyin}
\omega_i + h\mu_i  \leq \omega_{i-1} + h(\mathrm{const})\omega_i(\omega_{i-1} + \omega_{i-1}^2).
\end{equation}
Let $C_*$ equal $2$ times
the constant in the previous line and $f$ solve $f' = C_*f^3$ with $f(0) = \max\{R_0,1\}.$
Then, for all $t \in [0,t_0],$ $f(t) \geq 1,$ $f'(t)$ is nondecreasing and  
\begin{equation*}
(\mathrm{const})f(t)(f(t) + f^2(t)) \leq C_*f^3(t) = f'(t) \leq \frac{1}{h}\int_t^{t+h} f'(s)\,ds = \frac{1}{h}(f(t+h) - f(t)).
\end{equation*}
Hence
 \begin{equation*}
 f(ih) + h(\mathrm{const})f((i+1)h)(f(ih) + f^2(ih)) \leq f((i+1)h),
 \end{equation*}
for all $ i = 0,\dots, N-1$ for $h$ sufficiently small and so the inequality
\begin{equation*}
\begin{aligned}
\omega_{i} &\leq \omega_{i-1} + h(\mathrm{const})\omega_i(\omega_{i-1} + \omega_{i-1}^2)\\
&\leq f(ih) + h(\mathrm{const})f((i+1)ih)(f(ih) + f^2(ih)) \leq f((i+1)h)
\end{aligned}
\end{equation*}
for all $i = 0, \dots, N$
follows by induction on $i$ with the case $i = 0$ being $\omega_0 \leq R_0 \leq f(0)$ for sufficiently small $h.$
Summing equations \eqref{polyin} for $i = 1, \dots N$ then implies
\begin{equation*}
\omega_N + \sum_{i=1}^N h\mu_i  
\leq  R_0 + (\mathrm{const})\int_0^{t_0} f(t)(f(t) + f(t)^2)\,dt
 \end{equation*}

  Define functions $v_h,w_h \in L^2([0,t_0];H^1(U))$ as follows;
\begin{equation*}
v_h(t) = v^i, w_h(t) = w^i, \mbox{ for } ih \leq t < (i+1)h \mbox{ and } i = 0,\dots, N-1.
\end{equation*}
Then
\begin{equation*}
\int_0^{t_0} \|v_h,w_h\|^2_{H^1}\,dt  = \sum_{i=0}^{N-1} h(\omega_i + \mu_i) \leq  C_0(t_0) := R_0 + (\mathrm{const})\int_0^{t_0} f(t)(f(t) + f(t)^2)\,dt .
\end{equation*}
The right-hand side of this inequality is bounded independently of $h.$
Consequently some subsequence $\{v_h,w_h\}_{h\downarrow 0}$ converge to $(v,w)$ in the weak topology
of $L^2([0,t_0];$ $H^1(U))$ and the weak-$*$ topology $L^{\infty}([0,t_0]; L^2(U)).$
$(u,v)$ satisfy
\begin{equation}
\label{weaksolbnd}
\|v,w\|_{L^4([0,t_0], L^4(U))} \leq \|v,w\|_{L^2([0,t_0];H^1(U))}^{1/2} \|v,w\|_{L^{\infty}([0,t_0];L^2(U))}^{1/4} \leq C_0^{3/8}.
\end{equation}
Let $\psi \in C^{\infty}([0,t_0]\times U),$
with $\mathrm{Supp}(\psi) \subset [0, t_0) \times U.$
Write $\psi^i(\cdot) = \psi(ih, \cdot)$   
and compute;
{\footnotesize
\begin{equation*}
\begin{aligned}
&\int_0^{t_0} \int_U \psi' v_h\,dx \,dt = \sum_{i=0}^{N-1} \int_{ih}^{(i+1)h}\int_U \psi' v^i \,dx \,dt = \sum_{i=0}^{N-1}  \int_U (\psi((i+1)h) - \psi(ih)) v^i \,dx\\
&= -\int_U \psi_0 v_0  \,dx - \sum_{i=1}^N \int_U \psi^i(v^i - v_{i-1}) \,dx \\
&= -\int_U \psi_0 v_0  \,dx  +
 \sum_{i=1}^N h \int_U  \left(\nabla v^i- 
 v^{i}\nabla \phi^i \right)\cdot \nabla \psi^i
-  u^i\cdot \nabla v^i \psi^i\,dx\\
 &= -\int_U \psi_0 v_0 \,dx  +
 \sum_{i=1}^N \int_{ih}^{(i+1)h} \int_U    \left(\nabla v^i- 
 v^{i}\nabla \phi^i \right)\cdot \nabla \psi^i
-  u^i\cdot \nabla v^i \psi^i\,dx\,dt.\end{aligned}
\end{equation*}
}The shift operator is continuous on $L^2([0,t_0]; H^1(U)).$ 
Thus choosing possibly another subsequence of $h\downarrow 0$ such that $\phi_h$ converges to $\phi$
in $L^2([0,t_0];H^1(U)),$ the equation converges to 
\begin{equation*}
\int_{Q_{t_0}} \psi' v \,dx \,dt = \int_U \psi(0,x) v_0\,dx + 
\int_{Q_{t_0}}\left(\nabla v - v \nabla \phi \right)\cdot \nabla \psi
-  u \cdot \nabla v  \psi
 \,dx.
\end{equation*}
The analogous equation holds for $w$ and $\Delta \phi = v -w$ for almost every $t \in [0,t_0].$

The well definedness (uniqueness of $v,w$) and the continuity of $\mathscr{Y}$ now follows from
standard estimates on the solutions $v,w$ (see for example \cite{BiHeNa94}.)
\end{proof}

\begin{proposition}
\label{fixedpoint}
$\mathscr{Z}$ has a unique fixed point $a_{\nu}.$  
\end{proposition}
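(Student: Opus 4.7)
The plan is to apply Schauder's fixed point theorem to $\mathscr{Z}$ on the closed, convex set $B_2 \subset C([0,t_0];\mathbb{R}^\nu)$. The two preceding lemmas have already done most of the work: $\mathscr{X}$ is continuous with precompact image, and $\mathscr{Y}$ is well defined and continuous. What remains for the existence half is to check that $\mathfrak{j}$ really does send $\mathscr{Y}(B_2)$ into $B_1\times B_1\times B_1$ and that it is continuous in the $L^4$ topology.

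For the first point, if $(v,w) = \mathscr{Y}(a)$ with $a\in B_2$, the proof of lemma \ref{Llemma} shows that $(v,w)$ is bounded in $L^\infty([0,t_0];L^2(U))\cap L^2([0,t_0];H^1(U))$; hence by \eqref{ellipin1} and the two-dimensional Sobolev embedding $H^1(U)\hookrightarrow L^4(U)$, $\nabla\Delta^{-1}(v-w)$ lies in $L^\infty([0,t_0];L^4(U))$ with norm controlled by $C_0$. After at worst enlarging $R$ by a fixed absolute constant, the triple $\mathfrak{j}(v,w)$ lands in $B_1\times B_1\times B_1$. Continuity of $\mathfrak{j}$ in the $L^4_{t,x}$ topology is immediate from boundedness of $\nabla\Delta^{-1}$ on $L^4(U)$ (via $\Delta^{-1}:L^4(U)\to W^{2,4}(U)$). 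Therefore $\mathscr{Z}=\mathscr{X}\circ\mathfrak{j}\circ\mathscr{Y}:B_2\to B_2$ is continuous, and since $\mathscr{Z}(B_2)\subset\mathscr{X}(B_1\times B_1\times B_1)$ the image is precompact in $C([0,t_0];\mathbb{R}^\nu)$. Schauder's theorem yields a fixed point $a_\nu\in B_2$.

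For uniqueness, suppose $a$ and $\bar a$ are two fixed points and write $(v,w)=\mathscr{Y}(a)$, $(\bar v,\bar w)=\mathscr{Y}(\bar a)$, $e=\nabla\Delta^{-1}(v-w)$, $\bar e=\nabla\Delta^{-1}(\bar v-\bar w)$, $u=\mathfrak{i}(a)$, $\bar u=\mathfrak{i}(\bar a)$. Subtracting \eqref{ode} for $a$ and $\bar a$ and repeating the computation from the continuity proof of $\mathscr{X}$ yields
\begin{equation*}
\frac{d}{dt}\sum_{i=1}^\nu (a_i-\bar a_i)^2 \leq C\sum_{i=1}^\nu (a_i-\bar a_i)^2 + C\,\|(v-w)e-(\bar v-\bar w)\bar e\|_{L^2(U)}^2,
\end{equation*}
while subtracting the Fokker-Planck equations for $(v,w)$ and $(\bar v,\bar w)$, testing with $v-\bar v$ and $w-\bar w$, and using the $L^4$ bounds inherent in $B_1$ together with the isometry identity $\|u-\bar u\|_{\mathsf{H}(U)}=|a-\bar a|$ gives
\begin{equation*}
\frac{d}{dt}\|v-\bar v, w-\bar w\|_{L^2(U)}^2 \leq C\|v-\bar v,w-\bar w\|_{L^2(U)}^2 + C\sum_{i=1}^\nu (a_i-\bar a_i)^2.
\end{equation*}
Adding the two inequalities and applying Gronwall with zero initial data forces $a\equiv\bar a$ on $[0,t_0]$. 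The main obstacle is handling the cross term $(v-w)e-(\bar v-\bar w)\bar e$, which must be split as $(v-\bar v-w+\bar w)e+(\bar v-\bar w)(e-\bar e)$ and then controlled by elliptic regularity on the second factor together with the uniform $L^4$ bounds provided by $B_1$; existence itself is mostly bookkeeping built on the two preceding lemmas.
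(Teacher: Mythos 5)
Your proposal follows the same route as the paper: verify that the choices of $t_0$, $M$, $R$ make $\mathscr{Z}=\mathscr{X}\circ\mathfrak{j}\circ\mathscr{Y}$ a continuous self-map of $B_2$ with precompact image, apply the Schauder fixed point theorem for existence, and obtain uniqueness from a Gronwall-type energy estimate (which the paper dismisses as ``standard estimates''). Your write-up simply supplies more of the bookkeeping — in particular the mapping property of $\mathfrak{j}$ via elliptic regularity and the $2$D embedding — so it is correct and essentially identical in approach.
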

\begin{proof} $t_0, M, R$ are defined so that $\mathscr{X}(\mathfrak{j}(B_1\times B_1))\subset B_2$ and $\mathscr{Y}(B_2) \subset B_1 \times B_1.$  
Thus $\mathscr{Z}(B_2) = \mathscr{X} \circ \mathfrak{j} \circ \mathscr{Y}(B_2) \subset B_2.$ Furthermore,
 $\mathscr{Z}$ is a composition
of two continuous  maps followed by a compact map and hence is compact.  A fixed point $a_{\nu}$ of
$\mathscr{Z}$ in $B_2$ is guaranteed by the Schauder fixed point theorem.  The uniqueness
of the fixed point is proved by standard estimates.
\end{proof}

\begin{proposition}
\label{fixptprop}
Let $0 < t_0 < \infty$ and suppose that  
 $(v_{\nu},w_{\nu}) = \mathscr{Y}(a_{\nu}),$
and $a_{\nu} = \mathscr{X}(\mathfrak{j}(v_{\nu}, w_{\nu}))$ exist for some choice of $M$ and $R.$
Then  there exists constant $M_1$ depending only on $M_0, R_0, R_0',$ $S_0, \mu_v, \mu_w$ and $U$,
but independent of $M,R$ and $t_0$ such that 
\begin{equation*}
\max_{t \in [0,t_0]}  \,|a_{\nu}(t)|^2 + \|v_{\nu},w_{\nu}\|_{L^2(U)}^2 \leq M_1.
\end{equation*}
\end{proposition}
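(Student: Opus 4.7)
The approach is to use the entropy functional $\mathscr{H}(u,v,w)=\int_U(v\log v+w\log w)\,dx+\frac12\|\nabla\phi\|_{L^2(U)}^2+\frac12\|u\|_{L^2(U)}^2$ (with $\phi=\Delta^{-1}(v-w)$) from the remark as a Lyapunov function for the Galerkin solutions, and then bootstrap the resulting bounds to an $L^2$ estimate.

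First I would establish the entropy dissipation identity
\begin{equation*}
\frac{d}{dt}\mathscr{H}(u_\nu,v_\nu,w_\nu)=-\|\nabla u_\nu\|_{L^2(U)}^2-\int_U v_\nu|\nabla\log v_\nu-\nabla\phi_\nu|^2\,dx-\int_U w_\nu|\nabla\log w_\nu+\nabla\phi_\nu|^2\,dx\le 0.
\end{equation*}
This is obtained by (i) multiplying \eqref{ode} by $a_i$ and summing to get the kinetic energy identity $\frac12\frac{d}{dt}\|u_\nu\|^2+\|\nabla u_\nu\|^2=\int(v_\nu-w_\nu)u_\nu\cdot\nabla\phi_\nu\,dx$; (ii) testing the weak $v$- and $w$-equations with $\log v_\nu$ and $\log w_\nu$ (legitimate by positivity of $v_\nu,w_\nu$); (iii) computing $\frac{d}{dt}\tfrac12\|\nabla\phi_\nu\|^2=-\int\phi_\nu(v_{\nu,t}-w_{\nu,t})\,dx$ using $\phi_\nu|_{\partial U}=0$ and the equations. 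The Lorentz-force contribution in (i) cancels precisely with its counterpart coming out of (iii).

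Using $s\log s\ge-1/e$ and the nonnegativity of $\frac12\|\nabla\phi_\nu\|^2$ and $\frac12\|u_\nu\|^2$, the monotonicity $\mathscr{H}(t)\le\mathscr{H}(0)$ gives uniform-in-$t$ and $\nu$ bounds for $|a_\nu(t)|^2=\|u_\nu(t)\|_{L^2(U)}^2$, for $\|\nabla\phi_\nu(t)\|_{L^2(U)}^2$, and for $\|v_\nu(t)\|_{L\log L(U)}$ and $\|w_\nu(t)\|_{L\log L(U)}$, all in terms of $M_0,R_0',S_0,\mu_v,\mu_w$ and $|U|$. This already delivers the bound on $|a_\nu|^2$.

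For the $L^2$ bound, test the weak $v$- and $w$-equations with $v_\nu$ and $w_\nu$. The flux conditions \eqref{F1}-\eqref{F2} kill the boundary contribution from the diffusive operator and the transport term vanishes by incompressibility, leaving
\begin{equation*}
\frac12\frac{d}{dt}(\|v_\nu\|_{L^2(U)}^2+\|w_\nu\|_{L^2(U)}^2)+\|\nabla v_\nu\|_{L^2(U)}^2+\|\nabla w_\nu\|_{L^2(U)}^2=\int_U(v_\nu\nabla v_\nu-w_\nu\nabla w_\nu)\cdot\nabla\phi_\nu\,dx.
\end{equation*}
I bound the right-hand side by H\"older and Young: $|\int v\nabla v\cdot\nabla\phi|\le\frac14\|\nabla v\|^2+\|v\|_{L^3(U)}^2\|\nabla\phi\|_{L^6(U)}^2$; plugging in \eqref{ellipin2} and the entropy bound on $\|\nabla\phi_\nu\|_{L^2(U)}$ gives domination by $\frac12(\|\nabla v_\nu\|^2+\|\nabla w_\nu\|^2)+C_0(\|v_\nu\|_{L^3(U)}^3+\|w_\nu\|_{L^3(U)}^3)$. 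Choosing $\epsilon$ in \eqref{bilerin} small enough that $C_0\epsilon\|v_\nu\|_{L\log L(U)}$ and its analogue for $w_\nu$ are at most $\frac14$ (possible by the preceding paragraph), the $L^3$ cubes get absorbed into the $H^1$ norms up to a constant depending on $\mu_v,\mu_w$, producing
\begin{equation*}
\frac{d}{dt}(\|v_\nu\|_{L^2(U)}^2+\|w_\nu\|_{L^2(U)}^2)+c(\|v_\nu\|_{L^2(U)}^2+\|w_\nu\|_{L^2(U)}^2)\le C
\end{equation*}
with $c>0$ and $C$ depending only on $M_0,R_0,R_0',S_0,\mu_v,\mu_w$ and $U$; Gr\"onwall's inequality then yields the uniform-in-$t$ bound $\|v_\nu(t)\|_{L^2(U)}^2+\|w_\nu(t)\|_{L^2(U)}^2\le\max(R_0,C/c)$.

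The main obstacle is the rigorous derivation of the entropy identity at the Galerkin level. Testing with $\log v_\nu$ requires strict positivity of $v_\nu$, which must be inherited from the maximum principle built into the scheme of lemma~\ref{Llemma} (or handled via a $\log(v_\nu+\delta)$ approximation and limit), and the Lorentz-force cancellation---the mechanism that makes $\mathscr{H}$ a Lyapunov function---must be justified despite $u_\nu$ living in the finite-dimensional span $\{U_1,\dots,U_\nu\}$ while $v_\nu,w_\nu,\phi_\nu$ do not.
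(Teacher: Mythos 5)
Your proposal is correct and follows essentially the same route as the paper: the paper's Lyapunov quantity $\sum_i \tfrac12 a_i^2(t)+\mathscr{K}_h(t)$ is exactly your $\mathscr{H}$ (regularized with $\log(v_\nu+h)$, precisely the approximation you anticipate as the "main obstacle"), the Lorentz-force term from the ODE energy identity cancels against its counterpart in the entropy computation just as you describe, and the resulting bounds on $|a_\nu|^2$, $\|v_\nu,w_\nu\|_{L\log L}$ and $\|\nabla\phi_\nu\|_{L^2}$ feed into the same $L^2$ energy estimate via \eqref{ellipin1}, \eqref{bilerin} and Gr\"onwall. The only detail you leave implicit is producing the positive multiple of $\|v_\nu,w_\nu\|_{L^2}^2$ on the left of your final differential inequality, which the paper gets from the Nash inequality applied to the conserved $L^1$ norms.
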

\begin{proof}
Let $\phi_{\nu} = \Delta^{-1}(v_{\nu} - w_{\nu})$
and $u_{\nu} = \mathfrak{i}(a_{\nu}).$ Then one readily checks that 
\begin{equation}
\label{velen}
\begin{aligned}
&\sum_{i=1}^{\nu} \frac{1}{2} a_i^2(t) + \lambda_i \int_0^t a^2_i(s) \,ds \\
& =  
\sum_{i=1}^{\nu} \frac{1}{2} a_i^2(0)
  + \int_0^t \int_U (v_{\nu}(s) - w_{\nu}(s)) \nabla \phi_{\nu} \cdot u_{\nu}(s)\,dx \,ds
\end{aligned}
\end{equation}
for all $t \in [0,t_0].$

Let $f(t) = \|(v_{\nu})_-, (w_{\nu})_-\|_{L^2(U)}^2.$  $f$ is absolutely continuous with $f(0) = 0$ and
one readily checks that $f'(t) \leq 0$ for $t$ sufficiently small. Consequently $f(t) = 0$ for all $t \in [0,t_0]$
and $v_{\nu}(t)$ and $w_{\nu}(t)$ are nonnegative for all $t \in [0,t_0].$

For $h \geq 0$ let
\begin{equation*}
\begin{aligned}
\mathscr{K}_h(t) &= \int_U (v_{\nu}(t) + h) \log(v_{\nu}(t) + h) + (w_{\nu}(t) + h) \log(w_{\nu}(t) + h)\,dx\\
& - \int_U \left(\frac{1}{2}|\nabla \phi_{\nu}(t)|^2 + \phi_{\nu}(t)(v_{\nu}(t) - w_{\nu}(t))\right) \,dx.
\end{aligned}
\end{equation*}
$\mathscr{K}_h$ is similarly absolutely continuous and
\begin{equation*}
\begin{aligned}
\frac{d}{dt}\mathscr{K}_h(t) &= \int_U (\log(v_{\nu} + h)  + 1 - \phi_{\nu})v'_{\nu}  + (\log(w_{\nu} + h)  + 1 + \phi_{\nu})w'_{\nu}\,dx  \\
&+ \int_U  \nabla \phi_{\nu}\cdot \nabla \phi'_{\nu}   - \phi'_{\nu}(v_{\nu} - w_{\nu})\,dx
\end{aligned}
\end{equation*}
Note that the last three terms vanish for almost evert $t \in [0,t_0]$ because $\phi' = \Delta^{-1}(v'_{\nu} - w'_{\nu})$ for almost every $t\in [0,t_0].$  Continuing,
\begin{equation*}
\begin{aligned}
\frac{d}{dt}\mathscr{K}_h(t) &= \int_U  (\log(v_{\nu} + h)  + 1 - \phi_{\nu})v'_{\nu}  + (\log(w_{\nu} + h)  + 1 + \phi_{\nu})w'_{\nu} \,dx \\
&= -\int_U \nabla (\log(v_{\nu} + h)  + 1 - \phi_{\nu})\cdot \left(\nabla v_{\nu} - v_{\nu}\nabla \phi_{\nu}  +  v_{\nu} u_{\nu}\right) \,dx \\
&   -\int_U \nabla (\log(w_{\nu} + h)  + 1 + \phi_{\nu})\cdot \left(\nabla w_{\nu} + w_{\nu}\nabla \phi_{\nu}  +  w_{\nu} u_{\nu}\right) \,dx\\
&= -\int_U \frac{1}{v_{\nu} + h}\left(\nabla v_{\nu} - (v_{\nu} + h)\nabla \phi_{\nu}\right)\cdot \left(\nabla v_{\nu} - v_{\nu}\nabla \phi_{\nu} \right) \,dx \\
& -\int_U \frac{1}{w_{\nu} + h}\left(\nabla w_{\nu} + (w_{\nu} + h)\nabla \phi_{\nu} \right)\left(\nabla w_{\nu} + w_{\nu}\nabla \phi_{\nu} \right) \,dx \\
&- \int_U(v_{\nu} - w_{\nu})\nabla \phi_{\nu} \cdot  u_{\nu}\,dx.
\end{aligned}
\end{equation*}
Integrating this expression over $[0,t]$ for $t \leq t_0$ and adding it to \eqref{velen} (the last terms in both expressions cancel) gives
\begin{equation*}
\begin{aligned}
\sum_{i=1}^{\nu} \frac{1}{2} a_i^2(t) + \mathscr{K}_h(t) &\leq \sum_{i=1}^{\nu} \frac{1}{2} a_i^2(0) + \mathscr{K}_h(0) -
\int_0^t \int_U \frac{1}{v_{\nu} + h}\left|\nabla  v_{\nu} - v_{\nu}\nabla \phi_{\nu}\right|^2 \,dx \,ds\\
& - \int_0^t \int_U \frac{1}{w_{\nu} + h}\left|\nabla  w_{\nu} + w_{\nu}\nabla \phi_{\nu}\right|^2 \,dx \,ds + \eta(t)
\end{aligned}
\end{equation*}
where 
\begin{equation*}
\begin{aligned}
\eta(t) &= \int_0^t \int_U  \frac{h}{v_{\nu} + h}\nabla \phi_{\nu}\cdot \left(\nabla v_{\nu} - v_{\nu}\nabla \phi_{\nu} \right)\,dx \,dts \\ 
&- \int_0^t \int_U \frac{h}{w_{\nu} + h}\nabla \phi_{\nu}\left(\nabla w_{\nu} + w_{\nu}\nabla  \phi_{\nu}\right) \,dx \,ds
\end{aligned}
\end{equation*}
is majorized by the two terms appearing directly to the left of it and 
\begin{equation*}
\frac{\sqrt{h}}{2}\int_0^t \int_U |\nabla \phi|^2\,dx\,ds.
\end{equation*}
By bounded convergence we have
\begin{equation}
\label{totin}
\begin{aligned}
\sum_{i=1}^{\nu} \frac{1}{2} a_i^2(t) + \mathscr{K}_0(t) & \leq  \sum_{i=1}^{\nu} \frac{1}{2} a_i^2(t) + \lim_{h\downarrow 0} \mathscr{K}_h(t)\\
&  \leq \sum_{i=1}^{\nu} \frac{1}{2} a_i^2(0) + \lim_{h\downarrow 0}\mathscr{K}_h(0)  +  \frac{\sqrt{h}}{2}\int_0^t \int_U |\nabla \phi|^2\,dx\,ds\\
& = \sum_{i=1}^{\nu} \frac{1}{2} a_i^2(0) + \mathscr{K}_0(0)
\end{aligned}
\end{equation}
for all $t \in [0,t_0].$

We infer from \eqref{totin} that 
\begin{equation*}
|a_{\nu}(t)|^2 + \|v_{\nu}(t), w_{\nu}(t)\|_{L\log L(U)} + \|\nabla \phi_{\nu}(t)\|_{L^2(U)}^2\leq C' := M_0 + 2R_0' + 2S_0
\end{equation*}
for all $t \in [0,t_0].$ We use this result to make an energy estimate of $v_{\nu}$ and $w_{\nu}.$
We estimate 
\begin{equation*}
\begin{aligned}
&\int_{U} v_{\nu} \nabla \phi_{\nu}\cdot \nabla  v_{\nu} \,dx 
\leq
\|\nabla v_{\nu}\|_{L^2(U)}\|v_{\nu}\|_{L^3(U)} \|\nabla \phi_{\nu}\|_{L^6(U)}\\
& \leq c_{\Delta}' \|\nabla v_{\nu}\|_{L^2(U)}\|v_{\nu},w_{\nu}\|_{L^3(U)}\|v_{\nu} -w_{\nu}\|_{L^3}^{1/2}\|\nabla \phi\|_{L^2(U)}^{1/2}\\
&\leq c_{\Delta}' \sqrt{C'}\|\nabla v_{\nu}\|_{L^2(U)}\left( \epsilon \|v_{\nu},w_{\nu}\|_{H^1(U)}^2 \|v_{\nu},w_{\nu}\|_{L\log L(U)} + C_{\epsilon} \|v_{\nu},w_{\nu}\|_{L^1(U)} \right)^{1/2}\\
&\leq c_{\Delta}' \sqrt{C'}\|\nabla v_{\nu}\|_{L^2(U)}\left(C'\epsilon \|v_{\nu},w_{\nu}\|_{H^1(U)}^2  + C_{\epsilon} (\mu_v + \mu_w) \right)^{1/2}\\
&\leq 3c_{\Delta}' C' \sqrt{\epsilon} \|v_{\nu},w_{\nu}\|_{H^1(U)}^2  + \frac{1}{2\epsilon}\sqrt{C'}C_{\epsilon} (\mu_v + \mu_w) 
\end{aligned}
\end{equation*}
An analogous inequality holds for $w_{\nu}.$
Choose $\epsilon$ sufficiently small so that $3c_{\Delta}' C' \sqrt{\epsilon} < 1/4$ so that
\begin{equation*}
\begin{aligned}
&\frac{d}{dt}\|v_{\nu}, w_{\nu}\|_{L^2(U)}^2 +\|v_{\nu}, w_{\nu}\|_{L^2(U)}^2 + 2\|\nabla v_{\nu}, \nabla w_{\nu}\|_{L^2(U)}^2\\
&\leq 2\|v_{\nu}, w_{\nu}\|_{L^2(U)}^2 +  \|\nabla v_{\nu},\nabla w_{\nu}\|_{L^2(U)}^2  + \frac{2}{\epsilon}\sqrt{C'}C_{\epsilon} (\mu_v + \mu_w) \\
&\leq 2c_1 \|\nabla v_{\nu},\nabla w_{\nu}\|_{L^2(U)}\|v_{\nu}, w_{\nu}\|_{L^1(U)} +  \|\nabla v_{\nu},\nabla w_{\nu}\|_{L^2(U)}^2  + \frac{2}{\epsilon}\sqrt{C'}C_{\epsilon} (\mu_v + \mu_w) \\
&\leq \frac{3}{2}\|\nabla v_{\nu},\nabla w_{\nu}\|_{L^2(U)}^2 + \frac{2}{\epsilon}\sqrt{C'}C_{\epsilon}(\mu_v + \mu_w)   + 2c_1^2(\mu_v + \mu_w)^2
\end{aligned}
\end{equation*}
Let 
$M_1' = 2(\epsilon)^{-1}\sqrt{C'}C_{\epsilon} (\mu_v + \mu_w)  + 2c_1^2(\mu_v + \mu_w)^2 + 2R_0 .$
Then $\omega(t) = \|v_{\nu}(t), w_{\nu}(t)\|_{L^2(U)}^2$ and $\eta =$$ \|\nabla v_{\nu}(t), \nabla w_{\nu}(t)\|_{L^2(U)}^2/2$
satisfy
\begin{equation*}
\frac{d}{dt} \omega + \omega + \eta \leq  M_1, \quad \omega(0) < M_1
\end{equation*}
so that $w(t) < M_1$ for all $t\in [0,t_0].$  
Setting
$M_1 = M_1' + C'
$ the theorem is proved.
\end{proof}
\begin{proposition}
\label{extapprox}
There exist functions 
\begin{gather*}
a_{\nu} \in C(\mathbb{R}^+; \mathbb{R}^{\nu})\\
 v_{\nu},  w_{\nu} \in C(\mathbb{R}^+; L^2)\ \cap L^2_{\mathrm{loc}}(\mathbb{R}^+; H^1(U))
\end{gather*}
such that for any $0 < t_0 < \infty,$  $( v_{\nu}, w_{\nu}) = \mathscr{Y}( a_{\nu}),$
and $ a_{\nu} = \mathscr{X}(\mathfrak{j}( v_{\nu},  w_{\nu}))$ and 
\begin{equation*}
\max_{t \in \mathbb{R}^+}  \left( | a_{\nu}(t)|^2 + \| v_{\nu}(t), w_{\nu}(t)\|_{L^2(U)}^2 \right)< M_1.
\end{equation*}
\end{proposition}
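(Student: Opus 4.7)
The plan is to iterate the local existence given by Proposition \ref{fixedpoint}, using the a priori bound of Proposition \ref{fixptprop} to ensure that the time step does not shrink as we advance.

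First I would apply Proposition \ref{fixedpoint} on $[0,t^*]$ to obtain a fixed point $a_\nu^{(0)}$ together with $(v_\nu^{(0)},w_\nu^{(0)})=\mathscr{Y}(a_\nu^{(0)})$. By Proposition \ref{fixptprop} and by inequality \eqref{totin}, which in fact gives the monotonicity in $t$ of the entropy functional $|a_\nu|^2/2+\mathscr{K}_0$, the quantities
\begin{equation*}
|a_\nu^{(0)}(t)|^2,\quad \|v_\nu^{(0)}(t),w_\nu^{(0)}(t)\|_{L^2(U)}^2,\quad \|v_\nu^{(0)}(t),w_\nu^{(0)}(t)\|_{L\log L(U)},\quad \|\nabla\phi_\nu^{(0)}(t)\|_{L^2(U)}^2
\end{equation*}
are all bounded on $[0,t^*]$ by constants depending only on $M_0,R_0,R_0',S_0,\mu_v,\mu_w$ and $U$. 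Moreover, testing the weak formulations against $\psi\equiv 1$ shows that $\int_U v_\nu(t)\,dx$ and $\int_U w_\nu(t)\,dx$ are preserved in time, so the parameters $\mu_v,\mu_w$ are invariant under time-translation of the initial data.

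Second, I would take $(a_\nu^{(0)}(t^*),v_\nu^{(0)}(t^*),w_\nu^{(0)}(t^*))$ as new initial data and reapply Proposition \ref{fixedpoint} starting at $t=t^*$. Since $M_0,R_0,R_0',S_0$ at time $t^*$ are all majorized by the constants already computed from the original data, and since $\mu_v,\mu_w$ are unchanged, the constants $C_*, C_0, R, t^*$ produced by the recipe preceding Proposition \ref{fixedpoint} can be taken uniform in the iteration step. Consequently a new fixed point exists on $[t^*,2t^*]$ satisfying the same $M_1$-bound from Proposition \ref{fixptprop} (this is exactly why the note about abuse of notation was inserted). Concatenating $a_\nu^{(0)}$ with the new piece and similarly for $v_\nu,w_\nu$, I would define inductively $a_\nu$ on $[nt^*,(n+1)t^*]$ for all $n\ge 0$, whose union is $\mathbb{R}^+$.

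Third, I would check that the glued functions have the claimed regularity and satisfy the fixed-point equations on every compact subinterval. Continuity of $a_\nu$ on $\mathbb{R}^+$ is automatic from membership in $B_2$ on each piece together with the matching endpoint condition built into the restart. For $v_\nu,w_\nu$, the weak PDEs supply time-derivatives in $L^2_{\mathrm{loc}}(\mathbb{R}^+;H^{-1}(U))$, hence $v_\nu,w_\nu\in C(\mathbb{R}^+;L^2(U))$ via the standard interpolation argument on each $[0,T]$. The uniform bound in the conclusion is then simply the fact that every iteration piece obeys the same inequality $|a_\nu(t)|^2+\|v_\nu,w_\nu\|_{L^2(U)}^2\le M_1$ established in Proposition \ref{fixptprop}.

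The main obstacle is the verification that the time step $t^*$ can genuinely be chosen independent of $n$. This reduces to checking that every ingredient in the definitions of $C_*,C_0,R,t^*$ is governed by quantities that are either conserved ($\mu_v,\mu_w$) or monotone/uniformly bounded in $t$ (the $L^2$, $L\log L$, and $\|\nabla\phi\|_{L^2}^2$ norms). Once this is in hand, the inductive extension and the $M_1$-bound propagate without loss, and the proposition follows.
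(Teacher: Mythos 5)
Your proposal is correct and follows essentially the same route as the paper: both extend the local fixed point by restarting from the data at the current endpoint (the paper's announced abuse of notation), and both rely on the fact that the bound $M_1$ of Proposition \ref{fixptprop} is independent of $M$, $R$ and $t_0$, together with conservation of $\mu_v,\mu_w$ and the monotone entropy bound, to control the restarted data. The only difference is presentational --- the paper phrases the continuation as a contradiction with a maximal existence time $T$, whereas you run an explicit induction with a uniform time step --- but the underlying estimates are identical.
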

\begin{proof}
Let $T$ be the maximal positive real number for which the theorem holds.  Clearly $t_0 \geq t^*  > 0.$
Then $| a_{\nu}(t_0)|^2, \| v_{\nu}(t_0),  w_{\nu}(t_0)\|_{L^2(U)}^2  < \infty.$
Consider the functions $ a^*_{\nu},$ $( v^*_{\nu},  w^*_{\nu}) = \mathscr{Y}( a^*_{\nu})$ guaranteed by 
Theorem \ref{fixedpoint} 
when we introduce $u^*_0 =  \mathfrak{i}(a^*_{\nu}(t_0)),$ $ v^*_0 =  v_{\nu}(t_0),$ and $  w^*_0 =  w_{\nu}(t_0)$
(also defining $ M^*_0,R^*_0, R^{*'}_0$ and $S^*_0$ which are finite in terms of $u^*_0, v^*_0$ and $ w^*_0$.)   
Let $\tilde a_{\nu}(t) = a_{\nu}(t)$ for $t \leq T$ and $\tilde a_{\nu}(t) = a^*_{\nu}(t - t_0)$ for $t > T.$ 
Define $\tilde v_{\nu}$ and $\tilde w_{\nu}$ in the same way.
Then there exists $t^* > 0$ with $0 < \tilde t_0 = T + t^* < \infty$ and some $\tilde M$ and $\tilde R$ 
for which $(\tilde v_{\nu} , \tilde w_{\nu} ) = \mathscr{Y}(\tilde a_{\nu}),$
$\tilde a_{\nu}= \mathscr{X}(\mathfrak{j}(\tilde v_{\nu}, \tilde w_{\nu} )).$
By Theorem \ref{fixptprop},
\begin{equation*}
\max_{t \in [0,\tilde t_0]}  \,| \tilde a_{\nu} (t)|^2 + \| \tilde v_{\nu}(t), \tilde w_{\nu}(t)\|_{L^2(U)}^2 < M_1.
\end{equation*}
Then $\tilde t_0 > T$  contradicts the maximality of $T$ and concludes the proof.
\end{proof}
With the help of proposition \ref{extapprox} we are now able to supply the 
\begin{proof}[Proof of Theorem \ref{globalex}]
$(u,v,w)$ is said to be a global in time, weak solution of (\ref{NS} - \ref{I}) provided
\begin{gather*}
u \in L^2(\mathbb{R}^+; \mathsf{V}(U)) \cap C(\mathbb{R}^+; \mathsf{V}(U)) \mbox{ with } u' \in L^2(\mathbb{R}^+; \mathsf{V}'(U))\\
v,w \in L^2_{\mathrm{loc}}(\mathbb{R}^+; H^1(U)) \cap C(\mathbb{R}^+; L^2(U)) \mbox{ with } v',w' \in L^2_{\mathrm{loc}}(\mathbb{R}^+; H^{-1}(U))
\end{gather*}
and 
if for all  $t \in \mathbb{R}^+$ and all summable test functions $f \in C^{\infty}(\mathbb{R}_+; \mathscr{V}(U)),$
$g,h,\psi \in C^{\infty}(\mathrm{Clos}(Q))$ with $\psi|\partial U = 0,$
\begin{equation*}
\begin{aligned}
 \int_{\mathbb{R}^+} \langle u', f\rangle \,dt+ \int_Q   u \cdot \nabla u \cdot f \,dx \,dt &=
-  \int_Q\nabla u \cdot \nabla f - (v-w)\nabla \phi \cdot f \,dx \,dt\\
 \int_{\mathbb{R}^+}\langle v', g\rangle\,dt + \int_Q u \cdot \nabla  v g\,dx \,dt &=
-\int_Q
 \left(\nabla v - v\nabla \phi\right)\cdot \nabla g\,dx \,dt\\
 \int_{\mathbb{R}^+} \langle w' ,h\rangle \,dt + \int_Q u \cdot \nabla w  h\,dx \,dt 
&=
- \int_Q\left(\nabla w + w\nabla \phi \right)\cdot \nabla h \,dx \,dt\\
 \int_Q \nabla \phi \cdot \nabla  \psi \,dx \,dt 
 &= -\int_Q (v - w)\psi \,dx \,dt
\end{aligned}
\end{equation*}
Let $a_{\nu}, v_{\nu}$ and $w_{\nu}$ be as in proposition \ref{extapprox}.
Let  $T \geq 0.$ 
Let $u_{\nu} = \mathfrak{i}(a).$
Then there is a constant $C_T < \infty$ such that
\begin{equation}
\label{gbound}
\begin{aligned}
&\max_{t \in \mathbb{R}^+}  \,| u_{\nu}(t)|^2_{\mathsf{H}(U)} + \| v_{\nu}(t), w_{\nu}(t)\|_{L^2(U)}^2
+\int_0^T \| u_{\nu}(t)\|^2_{\mathsf{V}(U)} +\| u'_{\nu}(t)\|^2_{\mathsf{V}'(U)}\,dt\\
&+ \int_0^T
\| v_{\nu}(t), w_{\nu}(t)\|_{H^{1}(U)}^2 +
\| v'_{\nu}(t), w'_{\nu}(t)\|_{H^{-1}(U)}^2 \,dt < C_T.
\end{aligned}
\end{equation}
We have then
for some subsequence reindexed by $\nu$ 
\begin{gather}
\label{l2weaku}
u_{\nu} \rightharpoonup u \mbox{ in } L^2([0,T]; \mathsf{V}(U)), 
\quad
u_{\nu} \rightharpoonup_* u \mbox{ in } L^{\infty}([0,T]; \mathsf{H}(U))\\
u'_{\nu} \rightharpoonup u' \mbox{ in } L^2([0,T]; \mathsf{V}'(U))\\
\label{l2weakvw}
v_{\nu},w_{\nu} \rightharpoonup v, w \mbox{ in } L^2([0,T]; H^1(U)), 
\quad
v_{\nu},w_{\nu} \rightharpoonup_* v, w \mbox{ in } L^{\infty}([0,T]; L^2(U))\\
v'_{\nu},w'_{\nu} \rightharpoonup v', w' \mbox{ in } L^2([0,T]; H^{-1}(U)), 
\end{gather} 
for some functions $u,v$ and $w.$   We will see later that $u,v$ and $w$
do not depend on the $T.$  By the Aubin-Lion compactness lemma (Theorem 2.3 in \cite{TEMAM}),
we have
\begin{equation}
\label{l2conv}
u_{\nu} \rightarrow u \mbox{ in } L^2([0,T]; \mathsf{H}(U)), \quad
v_{\nu},w_{\nu} \rightarrow v,w \mbox{ in } L^2([0,T]; L^2(U)).
\end{equation}
It follows from elliptic regularity that there is also $\phi$ with (after choosing possibly another
subsequence)
\begin{equation}
\label{h2conv}
\phi_{\nu} \rightarrow \phi \mbox{ in } L^2([0,T]; H^2(U)).
\end{equation}

Let $g \in C^{\infty}(\mathrm{Clos}(Q))$
with  $\mathrm{Supp}(g) \subset [0,T) \times U.$ 
By the triangular ineqaulity, the equality $\mathscr{Y}(a_{\nu}) = (v_{\nu}, w_{\nu})$ implies 
\begin{equation*}
\begin{aligned}
&\left| \int_{\mathbb{R}^+}\langle v', g\rangle\,dt + \int_Q u \cdot \nabla  v g 
+\int_Q
 \left(\nabla v - v\nabla \phi\right)\cdot \nabla g\,dx \,dt\right| =\\
 &\lim_{\nu \rightarrow \infty} 
\left|
\int_{\mathbb{R}+} \langle v_{\nu}', g\rangle\,dt + \int_Q u_{\nu} \cdot \nabla  v_{\nu} g 
+\int_Q
 \left(\nabla v_{\nu} - v_{\nu} \nabla \phi_{\nu}\right)\cdot \nabla g\,dx \,dt
\right.\\
 &-\left.
\int_{\mathbb{R}^+}\langle v', g\rangle\,dt + \int_Q u \cdot \nabla  v g 
+\int_Q
 \left(\nabla v - v\nabla \phi\right)\cdot \nabla g\,dx \,dt\right| = 0
\end{aligned}
\end{equation*}
 provided we can demonstrate the convergence of the individual terms.  The convergence of the linear terms
follows from the definition of weak convergence.  We check convergence of the quadratic terms;
\begin{equation*}
\begin{aligned}
&\lim_{\nu \rightarrow \infty} \left| \int_Q\left( u_{\nu}\cdot \nabla  v_{\nu} - u \cdot \nabla v\right)g\,dx \,dt \right|\\
& \leq \lim_{\nu \rightarrow \infty}\|u_{\nu} - u\|_{L^2([0,T]; \mathsf{H}(U))}\|\nabla v_{\nu}\|_{L^2([0,T]; H^1(U))}\sup_Q |g| \\
&+\lim_{\nu \rightarrow \infty} \left| \int_Q u \cdot \nabla(  v_{\nu} -  v )g\,dx \,dt \right| = 0.
\end{aligned}
\end{equation*}
where the first term converges by \eqref{gbound}, \eqref{l2weakvw}    and \eqref{l2conv}.
 Similarly 
\begin{equation*}
\begin{aligned}
&\lim_{\nu \rightarrow \infty} \left| \int_Q\left( v_{\nu} \nabla \phi_{\nu}  - v \nabla \phi\right)\cdot \nabla g\,dx \,dt \right|\\
& \leq \lim_{\nu \rightarrow \infty} \|v_{\nu} - v\|_{L^2([0,T]; L^2(U))}\|\nabla \phi_{\nu}\|_{L^2([0,T]; H^1(U))}\sup_Q |\nabla g|\\
&+\lim_{\nu \rightarrow \infty}\|v\|_{L^2([0,T]; L^2(U))}\|\nabla (\phi_{\nu} - \phi)\|_{L^2([0,T]; H^1(U))}\sup_Q |\nabla g| = 0.
\end{aligned}
\end{equation*}
The analogous limits hold for $w_{\nu}$ and $h \in C^{\infty}(\mathrm{Clos}(Q)).$  
Let $\psi \in C^{\infty}(\mathrm{Clos}(Q))$ with $\psi|\partial U = 0.$
By  \eqref{l2conv} and \eqref{h2conv},
\begin{equation*}
\lim_{\nu \rightarrow \infty} \int_Q \nabla  \phi_{\nu}\cdot \nabla  \psi \,dx \,dt 
 = -\int_Q (v - w)\psi \,dx \,dt.
\end{equation*}

Let $F$ be of the form $F(t,x)= \sum_{i=j}^{\nu} b_j(t) U(x)$ for $\{b_j(\cdot)\}_{j=1}^{\nu}$
continuous. By the triangular ineqaulity, the equality $\mathscr{X}(\mathfrak{j} (v_{\nu}, w_{\nu})) = a_{\nu}$ implies 
\begin{equation*}
\begin{aligned}
&\left|
\int_{\mathbb{R}^+} \langle u',   F\rangle \,dt+ \int_Q   u \cdot \nabla u \cdot F 
+  \int_Q \nabla u \cdot \nabla F  - (v-w)\nabla \phi \cdot F \,dx \,dt  \right| =\\
&\lim_{\nu \rightarrow \infty} 
\left|
\int_{\mathbb{R}^+} \langle u'_{\nu},   F\rangle \,dt+ \int_Q   u_{\nu} \cdot \nabla u_{\nu} \cdot F 
+  \int_Q \nabla u_{\nu} \cdot \nabla F  - (v_{\nu}-w_{\nu})\nabla \phi_{\nu} \cdot F \,dx \,dt
\right.\\
 &-\left.
\int_{\mathbb{R}^+} \langle u',   F\rangle \,dt+ \int_Q   u \cdot \nabla u \cdot F 
+  \int_Q \nabla u \cdot \nabla F  - (v-w)\nabla \phi \cdot F \,dx \,dt  \right| = 0
\end{aligned}
\end{equation*}
provided we can demonstrate the convergence of the individual terms.  The convergence of the linear terms
follows from the definition of weak convergence.  Convergence of the first quadratic term
can be found in \cite{TEMAM}.  We check the last term
\begin{equation*}
\begin{aligned}
&\lim_{\nu \rightarrow \infty} \left| \int_Q\left( (v_{\nu}-w_{\nu})\nabla \phi_{\nu} - (v-w)\nabla \phi\right) \cdot F\,dx \,dt \right|\\
& \leq \lim_{\nu \rightarrow \infty}\|v_{\nu} - v, w_{\nu} - w\|_{L^2([0,T]; L^2(U))}\|\nabla \phi_{\nu}\|_{L^2([0,T]; L^2(U))}\sup_Q |F| \\
&+\lim_{\nu \rightarrow \infty} \|v-w\|_{L^2([0,T]; L^2(U))}\|\nabla (\phi_{\nu}  - \phi)\|_{L^2([0,T]; L^2(U))}\sup_Q |F| = 0.
\end{aligned}\end{equation*}
by \eqref{l2conv} and \eqref{h2conv}.

Finally,  
\begin{equation*}
\int_{\mathbb{R}^+} \langle u',  f\rangle  \,dt+ \int_Q   u \cdot  \nabla u \cdot f 
+ \nabla u \cdot \nabla f  - (v-w)\nabla \phi \cdot f  \,dx \,dt = 0
\end{equation*}
for $f  \in C^{\infty}([0,T]; \mathscr{V})$  by approximating $f$ by functions of the form 
$\sum_{i=j}^{\nu} b_j(t) U_i$ in the $C^{k}$ topology for $k = 1,2, \dots$

The uniqueness of $u,v,w$, which implies the extension of $u,v,w$ to $t \in \mathbb{R}^+,$ and the fact that 
\begin{equation*}
\lim_{t\rightarrow 0} u(t) = u_0, \quad
\lim_{t\rightarrow 0} v(t) = v_0, \quad
\lim_{t\rightarrow 0} w(t) = w_0, 
\end{equation*}
now follow from standard estimates.
\end{proof}

\section{Long Term Behavior Behavior}
\label{globalbehave}

Define a functional $\mathscr{J}$ on $H^1_0(U)$ and two absolutely continuous functions $\mathscr{K}$ and $\mathscr{L}$ over 
$\mathbb{R}_+$ in terms of functions $u,v,w,\phi;$
\begin{gather}
\mathscr{J}(\phi) = \int_U \frac{1}{2}|\nabla \phi|^2 \,dx + \mu_v \log \left( \int_U \exp \phi \,dx \right) + \mu_w \log \left( \int_U ( \exp \phi)^{-1} \,dx \right)\\
\mathscr{K}(t) = \int_U v(t)\log v(t) + w(t)\log w(t) + \frac{1}{2}|\nabla \phi(t)|^2 + \frac{1}{2}|u(t)|^2\,dx\\
\mathscr{L}(t) = \int_U \frac{\Theta}{2}|u|^2 + \frac{1}{2}\frac{(v(t) - V)^2}{V} +\frac{1}{2}\frac{(w(t) - W)^2}{W} + |\nabla (\phi(t) - \Phi)|^2 \,dx
\end{gather}
$\mathscr{K}$ is the entropy function of electro-hydrodynamics while $\mathscr{L}$ is stems form a  function introduced in \cite{AbMeVa04}
to study convergence of the Debye system to the steady state solution assuming Dirichelet boundary conditions.
$\Theta$ is a positive constant which will be specified in lemma \ref{Ldiff}.
$\mathscr{J}(\cdot)$ is strictly convex and bounded from below.  
There exists a unique function $\Phi \in C^{\infty}(U) \cap C_0(U)$ such that $\mathscr{J}(\Phi) < \mathscr{J}(\phi)$ 
for all $\Phi \neq \phi \in H^1_0(U),$  c.g. \cite{GoLi89};
Define functions
\begin{equation*}
{V}(x) =  \mu_v  \frac{\exp \Phi(x)}{\int_U \exp \Phi(x) \,dx},
\quad {W}(x) =  \mu_w  \frac{\exp (-\Phi(x))}{\int_U \exp (-\Phi(x)) \,dx }, \quad \forall x\in U.
\end{equation*}
We call ${V}, {W}, \Phi$ the steady state solution.  We will frequently us the fact that 
there are constants $a,b, a', b'$ for which
\begin{equation*}
0 < a \leq V(x) \leq b < \infty,  \quad 0 < a' \leq W(x) \leq b' < \infty, \quad \forall x \in U.
\end{equation*}
We call $V, W ,\Phi , U$  the stationary solution when $U \equiv 0.$  An important 
fact about $\Phi$ is that 
\begin{equation}
\label{pressure}
\Delta \Phi \nabla \Phi = \nabla (V + W)
\end{equation}
 is the gradient of a pressure
so that the stationary equations are consistent.

We first recall a well known fact about entropy functions which holds additionally with the kinetic energy term 
$|u|^2/2$ found in $\mathscr{K}.$ 
The manipulations in differentiating $\mathscr{K}$ may be justified by approximating $v,w$ by strictly 
positive functions or by the argument used in the proof of proposition  \ref{fixptprop}.
\begin{lemma}
\label{Kdecay}
\begin{equation*}
\begin{aligned}
\mathscr{K}(t) - \mathscr{K}(0)&= - \int_0^t \int_U |2\nabla \sqrt{v(s)} - \sqrt{v(s)} \nabla \phi(s)|^2\,dx \,ds \\
&- \int_0^t \int_U|2\nabla \sqrt{w(s)} + \sqrt{w(s)} \nabla \phi(s)|^2 + |\nabla u(s)|^2\,dx \,ds  \leq 0.
\end{aligned}
\end{equation*}
\end{lemma}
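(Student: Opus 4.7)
The plan is to differentiate $\mathscr{K}(t)$ term by term, substitute the evolution equations \eqref{NS}--\eqref{poisson} for the time derivatives, integrate by parts using the boundary conditions \eqref{F1}--\eqref{D}, and observe the cancellations that leave precisely the three dissipation integrals. As noted in the statement, the formal manipulations can be legalized either by approximating $v, w$ by strictly positive functions or, as in proposition \ref{fixptprop}, by replacing $v, w$ with $v+h, w+h$ and passing to the limit $h \downarrow 0$; below I carry the computation out as if $v, w$ were classical.

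For the entropy contribution, \eqref{NP1} and the chain rule give
\begin{equation*}
\frac{d}{dt}\int_U v\log v\,dx = \int_U (\log v + 1)\bigl[\nabla\cdot(\nabla v - v\nabla\phi) - u\cdot\nabla v\bigr]\,dx.
\end{equation*}
The convective term equals $-\int_U u\cdot\nabla(v\log v)\,dx$, which vanishes by \eqref{DIV} and \eqref{S}. The parabolic term, after integration by parts with vanishing boundary contribution thanks to \eqref{F1}, becomes $-\int_U v^{-1}\nabla v\cdot(\nabla v - v\nabla\phi)\,dx = -\int_U |\nabla v|^2/v + \int_U \nabla v\cdot\nabla\phi$. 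An analogous calculation using \eqref{NP2} and \eqref{F2} gives $\frac{d}{dt}\int_U w\log w = -\int_U |\nabla w|^2/w - \int_U \nabla w\cdot\nabla\phi$.

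For the electrostatic energy, $\phi|_{\partial U}=0$ gives $\frac{d}{dt}\int_U \tfrac12|\nabla\phi|^2 = -\int_U \phi(v_t - w_t)\,dx$. Substituting \eqref{NP1}--\eqref{NP2} and integrating by parts (again the boundary terms vanish because $\phi|_{\partial U}=0$) one obtains
\begin{equation*}
\int_U \nabla\phi\cdot\nabla(v-w)\,dx - \int_U (v+w)|\nabla\phi|^2\,dx - \int_U (v-w)\,u\cdot\nabla\phi\,dx.
\end{equation*}
For the kinetic energy, pairing \eqref{NS} with $u$ and using \eqref{DIV}, \eqref{S} to kill the pressure and the convective self-interaction, and then \eqref{poisson} to rewrite the Lorentz term, yields $\frac{d}{dt}\int_U \tfrac12|u|^2 = -\int_U |\nabla u|^2 + \int_U (v-w)\,u\cdot\nabla\phi$.

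Summing the four derivatives, the Lorentz-transport cross terms $\mp\int_U (v-w)u\cdot\nabla\phi$ cancel exactly, and the remaining pieces complete two squares:
\begin{equation*}
\frac{d}{dt}\mathscr{K}(t) = -\int_U \tfrac{1}{v}|\nabla v - v\nabla\phi|^2 - \tfrac{1}{w}|\nabla w + w\nabla\phi|^2 - |\nabla u|^2\,dx.
\end{equation*}
Applying the identity $v^{-1}|\nabla v - v\nabla\phi|^2 = |2\nabla\sqrt{v} - \sqrt{v}\nabla\phi|^2$, which follows from $\nabla\sqrt{v} = \nabla v/(2\sqrt{v})$, together with its counterpart for $w$, and integrating over $[0,t]$, delivers the stated identity; nonpositivity of the right-hand side is automatic. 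The main technical obstacle is justifying these pointwise computations at the regularity of weak solutions, in particular handling $1/v$ and $\log v$ when $v$ may vanish; this is precisely what the $(v+h)$-regularization of proposition \ref{fixptprop} accomplishes, with bounded convergence as $h\downarrow 0$ producing the squared Fisher-information integrands on the right.
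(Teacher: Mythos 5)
Your proposal is correct and follows essentially the same route as the paper: the paper's proof of this lemma is simply a pointer to the computation in the proof of proposition \ref{fixptprop}, which is exactly the differentiate--substitute--integrate-by-parts argument you carry out, including the cancellation of the cross terms $\mp\int_U (v-w)\,u\cdot\nabla\phi\,dx$ between the kinetic and electrostatic balances and the $(v+h,w+h)$ regularization with bounded convergence to handle the degeneracy of $\log v$ and $1/v$. The completion of the squares into $|2\nabla\sqrt{v}-\sqrt{v}\nabla\phi|^2$ matches the paper's Fisher-information form, so nothing further is needed.
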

\begin{proof}
The proof is contained in the proof of proposition \ref{fixptprop}.
\end{proof}

\begin{lemma}[Weighted Poincar\'e Inequality]
\label{wpi}
Let $\Omega$ be an open subset of $\mathbb{R}^d$ and $\rho \in H^1(\Omega)$ satisfy
\begin{equation*}
0 <  \rho(x) \leq b < \infty, \quad \forall x \in \Omega
\end{equation*}
with $\rho^{-1}$ integrable.
Then there exists a $C_{\rho} = C(\rho, \Omega)$ such that 
\begin{equation*}
\int_\Omega f^2 \,dx \leq C_{\omega} \int_\Omega \left| \nabla (f \rho)  \right|^2\,dx
\end{equation*}
whenever
\begin{equation*}
\int_\Omega f \,dx = 0.
\end{equation*}
\end{lemma}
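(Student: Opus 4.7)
My plan is a compactness argument by contradiction, using the Rellich--Kondrachov theorem (so I assume $\Omega$ is bounded, as it is in the application). Suppose no such $C_\rho$ works; then there is a sequence $\{f_n\}$ in $L^2(\Omega)$ with $\int_\Omega f_n\,dx = 0$, $\|f_n\|_{L^2(\Omega)} = 1$, and $\|\nabla(f_n\rho)\|_{L^2(\Omega)}\to 0$. Setting $g_n = f_n\rho$, the bound $\rho \le b$ yields $\|g_n\|_{L^2(\Omega)} \le b$, so $\{g_n\}$ is bounded in $H^1(\Omega)$.

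Passing to a subsequence, I would have $g_n \to g$ strongly in $L^2(\Omega)$ and pointwise a.e., while $\nabla g = 0$, so $g$ is constant on each connected component of $\Omega$; assuming $\Omega$ connected, write $g \equiv c$. The zero-mean constraint becomes $0 = \int_\Omega f_n\,dx = \int_\Omega g_n/\rho\,dx$. Since $\{f_n\}$ is bounded in $L^2(\Omega)$ it is uniformly integrable, so Vitali's theorem applied to $f_n \to c/\rho$ a.e.\ gives $c\int_\Omega \rho^{-1}\,dx = 0$, which forces $c=0$ because $\rho^{-1} > 0$ is integrable.

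The main obstacle is then to upgrade $g_n \to 0$ in $L^2(\Omega)$ to $f_n \to 0$ in $L^2(\Omega)$, which is needed to contradict $\|f_n\|_{L^2(\Omega)} = 1$. This is subtle because $f_n = g_n/\rho$ can be large wherever $\rho$ is small. I would split $\Omega_\epsilon = \{\rho \ge \epsilon\}$ and its complement: on $\Omega_\epsilon$ one has $\int_{\Omega_\epsilon} f_n^2\,dx \le \epsilon^{-2}\|g_n\|_{L^2(\Omega)}^2 \to 0$, while on $\Omega \setminus \Omega_\epsilon$ I would exploit that $g_n$, being bounded in $H^1$ on a two-dimensional domain, lies in $L^p$ for every $p < \infty$ by Sobolev embedding, and then combine this with $\rho^{-1}\in L^1$ via H\"older to gain uniform integrability of $\{f_n^2\}$. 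Sending $\epsilon \downarrow 0$ after $n \to \infty$ then supplies the missing tail estimate and yields the desired contradiction.
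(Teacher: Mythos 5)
Your argument has the same skeleton as the paper's: argue by contradiction, normalize so that $\|f_n\|_{L^2(\Omega)}=1$, set $g_n=f_n\rho$, use the bound $\rho\leq b$ to get an $H^1$-bounded sequence, extract a weak limit which must be a constant because its Dirichlet energy vanishes, and then use the mean-zero constraint together with the integrability of $\rho^{-1}$ to show that the constant is zero. Your treatment of the ``constant equals zero'' step is in fact more careful than the paper's: the paper passes to the limit in $\int_\Omega \rho^{-1}g_n\,dx$ with no justification (delicate when only $\rho^{-1}\in L^1(\Omega)$ is assumed, since $g_n$ converges in $L^p$ but not in $L^\infty$), whereas your Vitali argument, based on the uniform integrability of the $L^2$-bounded sequence $\{f_n\}$ on a bounded domain, legitimately yields $c\int_\Omega\rho^{-1}\,dx=0$.

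The divergence is in the final step, which you rightly single out as the crux. The paper simply asserts that $G=0$ contradicts the normalization $1=\|h_i\|_{L^2(\Omega)}^2$, i.e.\ it takes for granted that $g_n\to 0$ in $L^2(\Omega)$ forces $f_n=g_n/\rho\to 0$ in $L^2(\Omega)$. Your attempt to supply this step does not close under the stated hypotheses: on $\{\rho<\epsilon\}$ you propose to bound $\int f_n^2\,dx=\int g_n^2\rho^{-2}\,dx$ by H\"older, using $g_n\in L^p$ for every $p<\infty$ together with $\rho^{-1}\in L^1(\Omega)$; but the conjugate exponent then falls on $\rho^{-2}$, so you need $\rho^{-2}\in L^{p'}(\Omega)$ for some $p'>1$, that is $\rho^{-1}\in L^{2p'}(\Omega)$ --- strictly stronger than, and not implied by, $\rho^{-1}\in L^1(\Omega)$. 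Consequently the tail estimate, and hence the claimed uniform integrability of $\{f_n^2\}$, is not available as written. This gap is real at the level of generality of the lemma's statement (and it is one the paper's own proof shares, since the paper never addresses the step at all), but it is harmless in the only place the lemma is invoked, namely lemma \ref{Ldiff}: there $\rho=V^{-1}$ with $0<a\leq V\leq b$, so $\rho$ is bounded below away from zero and $f_n\to 0$ in $L^2(\Omega)$ follows immediately from $g_n\to 0$ in $L^2(\Omega)$. If you add the hypothesis that $\rho$ is bounded below (or that $\rho^{-1}\in L^{2+\delta}$ for some $\delta>0$), your proof is complete and in fact tighter than the paper's.
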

\begin{proof}
Suppose that no such constant exists.  Then there is a sequence of functions $\{f_i\}_{i=1}^{\infty}$ in $H^1(\Omega)$
with
\begin{equation*}
\int_\Omega f_i \,dx = 0,\quad
\|f_i\|_{L^2(\Omega)}^2 \geq i \int_{\Omega} |\nabla(f_i \rho)|^2 \,dx.
\end{equation*}
Let $h_i = f_i/\|f_i\|_{L^2(\Omega)}$ so that 
\begin{equation}
\label{normalize}
1 = \|h_i\|_{L^2(\Omega)}^2 \geq i \int_{\Omega} |\nabla(h_i \rho)|^2 \,dx.
\end{equation}
Write $g_i  = h_i\rho.$  Then 
\begin{equation*}
\int_{\Omega} g_i^2 \,dx = \int_{\Omega} h_i^2 \rho^2 \,dx \leq b^2 
\end{equation*}
shows that $g_i$ is bounded in $H^1(\Omega)$ and thus converges weakly to an element 
$g \in H^1(\Omega).$   Fatou's lemma
\begin{equation*}
\int_{\Omega} |\nabla g|^2 \,dx \leq \liminf_{i \rightarrow \infty} \int_{\Omega} |\nabla g_i|^2 \,dx \leq \lim_{i \rightarrow \infty} \frac{1}{i} = 0
\end{equation*}
shows that $g(x) = G$ a.e. for some constant $G.$     Then
{\footnotesize
\begin{equation*}
G\int_{\Omega} \rho^{-1} \,dx = \int_{\Omega} \rho^{-1} g\,dx = \lim_{i\rightarrow 0} \int_{\Omega}  \rho^{-1} g_i \,dx 
= \lim_{i\rightarrow 0} \int_{\Omega}  h_i \,dx= \lim_{i\rightarrow 0} \frac{1}{\|f_i\|_{L^2(\Omega)}}\int_{\Omega}  f_i \,dx = 0.
\end{equation*}
}We infer $G = 0$ since $\rho^{-1}$ is nonzero on a set of positive measure.  The contradiction with \eqref{normalize} completes the proof.
\end{proof}

\begin{lemma}
Define functions $g, h \geq 0$ via the formula  
\begin{equation*}
g(x,t) V(x,t) = v(x,t), \quad h(x,t) W(x,t) = w(x,t)
\end{equation*}
for $(x,t) \in U \times [0,\infty).$
Then
\begin{gather}
\label{conserve}
 \int_U g V \,dx = \mu_v,\quad   \int_U h W \,dx = \mu_w, \quad \forall t \in [0,\infty)
\end{gather}
and $g$ and $h$ are generalized solutions of the equations 
\begin{gather}
\label{gsolve}
 \frac{\partial (gV)}{\partial t} + u \cdot \nabla  (gV) = \nabla \cdot  \left(V\left(\nabla g  - g\nabla \Psi \right)\right),
\quad
\frac{\partial g}{\partial \nu}  - g\frac{\partial \Psi}{\partial \nu} = 0,\\
\label{hsolve}
 \frac{\partial (hW)}{\partial t} +  u \cdot \nabla (hW) = \nabla \cdot  \left(W\left(\nabla  h  + h \nabla \Psi \right)\right),
\quad
\frac{\partial h}{\partial \nu}  + h\frac{\partial \Psi}{\partial \nu} = 0
\end{gather}
where we define $\Psi = \phi - \Phi.$
\end{lemma}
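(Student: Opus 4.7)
The plan is to verify the four assertions (well-definedness and nonnegativity of $g,h$, conservation, PDE, and boundary condition) by direct substitution, leveraging one crucial identity coming from the explicit form of the steady state. Since $0 < a \leq V \leq b$ and $0 < a' \leq W \leq b'$ on $U$, the functions $g = v/V$ and $h = w/W$ are well-defined at almost every $(x,t)$, inherit the spatial regularity class of $v,w$, and are nonnegative because $v,w \geq 0$ (this nonnegativity was established in the proof of Proposition \ref{fixptprop}). The conservation identities in \eqref{conserve} are immediate: $\int_U gV\,dx = \int_U v\,dx = \mu_v$ and $\int_U hW\,dx = \int_U w\,dx = \mu_w$, both preserved in time since $v,w$ satisfy \eqref{NP1}, \eqref{NP2} with no-flux boundary data \eqref{F1}, \eqref{F2}.

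The key observation I would invoke for the PDE is the logarithmic identity satisfied by the steady state:
\begin{equation*}
\nabla V = V \nabla \Phi, \qquad \nabla W = - W \nabla \Phi,
\end{equation*}
which follows by direct differentiation of the explicit formulas $V = \mu_v e^{\Phi}/\int_U e^{\Phi}\,dx$ and $W = \mu_w e^{-\Phi}/\int_U e^{-\Phi}\,dx$. Using $\Psi = \phi - \Phi$ and $v = gV$, I compute
\begin{equation*}
\nabla v - v\nabla \phi = V\nabla g + g\nabla V - gV\nabla \phi = V\nabla g + gV\nabla \Phi - gV\nabla \phi = V(\nabla g - g\nabla \Psi).
\end{equation*}
Since $V$ is independent of $t$, $\partial_t v = \partial_t(gV)$, and substitution into \eqref{NP1} yields \eqref{gsolve}. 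The computation for $h$ is parallel, with the sign flip in $\nabla W = -W\nabla \Phi$ producing exactly the $+h\nabla\Psi$ term in \eqref{hsolve} after substitution into \eqref{NP2}.

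For the boundary condition, I would apply the same algebraic identity pointwise on $\partial U$: from \eqref{F1},
\begin{equation*}
0 = \frac{\partial v}{\partial \nu} - v\frac{\partial \phi}{\partial \nu} = V\frac{\partial g}{\partial \nu} + g V\frac{\partial \Phi}{\partial \nu} - gV\frac{\partial \phi}{\partial \nu} = V\left(\frac{\partial g}{\partial \nu} - g\frac{\partial \Psi}{\partial \nu}\right),
\end{equation*}
and since $V \geq a > 0$ on $\overline{U}$, we may divide; the corresponding computation using \eqref{F2} and $\nabla W = -W\nabla \Phi$ produces the boundary condition in \eqref{hsolve}.

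The only genuine subtlety is that \eqref{gsolve} and \eqref{hsolve} must be interpreted in the generalized (distributional) sense matching the regularity of the weak solution supplied by Theorem \ref{globalex}; I would therefore state and verify the equations against test functions by multiplying the weak form of \eqref{NP1}, \eqref{NP2} by $\psi/V$ or $\psi/W$ respectively (a legitimate operation since $V,W$ are smooth and bounded below), so that the no-flux boundary terms incorporated into the weak formulation of \eqref{F1}, \eqref{F2} transform precisely into the no-flux boundary terms for \eqref{gsolve}, \eqref{hsolve}. This bookkeeping of the weak formulation, rather than any nontrivial estimate, is the main (but routine) obstacle.
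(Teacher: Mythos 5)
Your proposal is correct and follows essentially the same route as the paper: the paper's computation via $\log v = \log g + \Phi + (\mathrm{const})$ is precisely your identity $\nabla V = V\nabla\Phi$ (equivalently $\nabla \log V = \nabla\Phi$), and both arguments reduce to the flux identity $\nabla v - v\nabla\phi = V(\nabla g - g\nabla\Psi)$ followed by substitution into \eqref{NP1}, \eqref{F1}. Your closing remark on the weak formulation is slightly more careful than the paper's ``treated by approximation,'' though note that since the flux and time-derivative terms coincide exactly, the weak form of \eqref{gsolve} is the weak form of \eqref{NP1} tested against the \emph{same} test function $\psi$ --- no division by $V$ is actually needed.
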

\begin{proof}
By definition 
\begin{equation*}
\int_U gV dx = \int_U v \,dx = \mu_v, \quad \int_U h W dx = \int_U w \,dx = \mu_w.
\end{equation*}
Elementary manipulations of the definitions will show (\ref{gsolve}, \ref{hsolve}).
For simplicity assume $g,v > 0$ are smooth (the alternate case
being treated by approximation);
\begin{equation*}
\log v = \log g +  \Phi +  \log \frac{\mu_v}{\int_U e^{\Phi} \,dx} = \log g +  \Phi +  (\mathrm{const}).
\end{equation*}
Thus, 
\begin{equation*}
\frac{1}{g}\nabla g  -  \nabla  \Psi = \frac{1}{v}\nabla v - \nabla \Phi -\nabla \Psi
=\frac{1}{v}\nabla v - \nabla \phi.
\end{equation*}
This shows the second equality in \eqref{gsolve}.
Multiplying this equation by $v = gV,$ and taking the divergence 
\begin{equation*}
\nabla \cdot \left(V\left(\nabla g  -  g\nabla \Psi\right)\right)=
\frac{\partial v}{\partial t} + u \cdot \nabla v
= \frac{\partial (gV)}{\partial t} + u \cdot \nabla  (gV) 
\end{equation*}
as required. The analogous calculation proves the result for $h.$
\end{proof}

\begin{lemma}
\label{Ldiff}
There exists positive constants $C_1, C_2$ and $C_3$ depending only on
$U,$ $V,$ $\|V\|_{\inf},$ $\|V\|_{\sup},$ $W,$ $\|W\|_{\inf}$ and $\|W\|_{\sup}$ such  that
\begin{gather}
\label{Ldecay}
\frac{d}{dt} \mathscr{L}(t) \leq -C_1 \mathscr{L}(t) + C_2 \mathscr{L}(t)^2 + C_3 \mathscr{L}(t)^4.
\end{gather}
\end{lemma}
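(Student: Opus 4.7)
The plan is to differentiate each of the four integrals in $\mathscr{L}(t)$ using the equations of motion \eqref{NS}--\eqref{poisson} and the perturbation form \eqref{gsolve}--\eqref{hsolve}, isolate a principal nonpositive dissipation, convert it into $-C_1\mathscr{L}(t)$ via the weighted Poincar\'e inequality (Lemma \ref{wpi}), and control the remaining nonlinear cross terms by $C_2\mathscr{L}^2+C_3\mathscr{L}^4$ through 2D Gagliardo--Nirenberg/Ladyzhenskaya estimates combined with Young's inequality.

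First I would test \eqref{NS} against $u$ and use \eqref{pressure} to discard the stationary Lorentz contribution $\Delta\Phi\,\nabla\Phi$ (a gradient, hence $L^2$-orthogonal to divergence-free $u$). This leaves
\begin{equation*}
\tfrac{\Theta}{2}\tfrac{d}{dt}\|u\|_{L^2}^2=-\Theta\|\nabla u\|_{L^2}^2+\Theta\int_U u\cdot\bigl(\Delta\Phi\,\nabla\Psi+\Delta\Psi\,\nabla\Phi+\Delta\Psi\,\nabla\Psi\bigr)\,dx.
\end{equation*}
Differentiating $\tfrac12\int V(g-1)^2+\tfrac12\int W(h-1)^2$ using \eqref{gsolve}--\eqref{hsolve} and integrating by parts against the no-flux boundary conditions yields the parabolic dissipation $-\int V|\nabla g|^2-\int W|\nabla h|^2$ together with the interaction terms $\int Vg\,\nabla g\cdot\nabla\Psi-\int Wh\,\nabla h\cdot\nabla\Psi$ and convective remainders $-\int(g-1)u\cdot\nabla(gV),\ -\int(h-1)u\cdot\nabla(hW)$. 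Differentiating $\int|\nabla\Psi|^2$ via Poisson's equation and the evolution of $v,w$ gives
\begin{equation*}
\tfrac{d}{dt}\int|\nabla\Psi|^2=2\int\nabla\Psi\cdot\bigl(V(\nabla g-g\nabla\Psi)-W(\nabla h+h\nabla\Psi)\bigr)dx+2\int\Psi\, u\cdot\nabla(v-w)\,dx,
\end{equation*}
whose diagonal part $-2\int(v+w)|\nabla\Psi|^2\leq 0$ is additional dissipation.

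Summing, the principal dissipation $-\Theta\|\nabla u\|_{L^2}^2-\int V|\nabla g|^2-\int W|\nabla h|^2-2\int(v+w)|\nabla\Psi|^2$ must be bounded above by $-C_1\mathscr{L}$. I would apply Lemma \ref{wpi} with $\rho=1/V$ and $f=v-V$, which has zero mean by \eqref{conserve}, to get $\int(v-V)^2\leq C_V\!\int|\nabla(g-1)|^2\leq (C_V/\inf V)\!\int V|\nabla g|^2$, and symmetrically for $w-W$; the ordinary Poincar\'e inequality then handles $u\in\mathsf V(U)$ and $\Psi\in H^1_0(U)$, while the elliptic bound $\|\nabla\Psi\|_{L^2}^2\leq C(\|v-V\|_{L^2}^2+\|w-W\|_{L^2}^2)$ shows that the $\nabla\Psi$-contribution to $\mathscr{L}$ is already controlled by the $v,w$-pieces. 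The cross terms are bounded by 2D interpolation; the prototypical estimate is
\begin{equation*}
\Bigl|\int u\cdot\Delta\Psi\,\nabla\Psi\,dx\Bigr|\leq C\|u\|_{L^2}^{1/2}\|\nabla u\|_{L^2}^{1/2}\|\Delta\Psi\|_{L^2}^{3/2}\|\nabla\Psi\|_{L^2}^{1/2}\leq C\mathscr{L}^{5/4}\|\nabla u\|_{L^2}^{1/2},
\end{equation*}
via Ladyzhenskaya on $\|u\|_{L^4},\|\nabla\Psi\|_{L^4}$ and elliptic regularity $\|\Delta\Psi\|_{L^2}\leq C\mathscr{L}^{1/2}$; Young's inequality with exponents $(4,4/3)$ absorbs $\epsilon\|\nabla u\|_{L^2}^2$ into the kinetic dissipation and leaves $C_\epsilon\mathscr{L}^{5/3}\leq\alpha\mathscr{L}+C\mathscr{L}^2+C\mathscr{L}^4$. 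Linear-in-$\mathscr{L}$ contributions like $\Theta\!\int u\cdot\Delta\Phi\,\nabla\Psi$ are absorbed into $-C_1\mathscr{L}$ by taking $\Theta$ large, while quartic remainders of the type $\int(g-1)^2 u\cdot\nabla V$ and $\int V(g-1)|\nabla\Psi|^2$ (bounded via Ladyzhenskaya as $C\mathscr{L}\|\nabla g\|_{L^2}$ and re-Young-ized with $p=4$) produce the $C_3\mathscr{L}^4$ term.

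The main obstacle is the bookkeeping in this last step: $\Theta$ must be chosen so that $\Theta\|\nabla u\|_{L^2}^2$ dominates every $u$-coupling cross term with margin enough that the Poincar\'e inequalities leave a positive $C_1>0$, and simultaneously the Young constants in each nonlinear estimate must be tuned so no $\epsilon$-gradient absorption overruns the total parabolic/kinetic dissipation budget. The structural cancellation between the Lorentz contribution from step one and the mixed $\nabla g\cdot\nabla\Psi,\ \nabla h\cdot\nabla\Psi$ terms from the $g,h$-equations (made explicit by \eqref{pressure} and by expanding around $g=h=1,\Psi=0$) is what makes this balance possible; indeed, the specific value of $\Theta$ promised in the statement of the lemma is dictated precisely by the requirement that this cancellation close cleanly.
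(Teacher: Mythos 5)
Your proposal follows essentially the same route as the paper: differentiate $\mathscr{L}$ using the perturbation equations for $g$ and $h$, use \eqref{pressure} to reduce the Lorentz term to one involving only $\Psi$, convert the dissipation $-\Theta\|\nabla u\|^2 - \int V|\nabla g|^2 - \int W|\nabla h|^2$ into $-C_1\mathscr{L}$ via Lemma \ref{wpi} (applied exactly as you do, with $\rho = V^{-1}$ and the zero-mean perturbation), and control the higher-order remainders by Gagliardo--Nirenberg/Ladyzhenskaya plus Young, with $\Theta$ fixed to close the balance. The only difference is cosmetic: the paper organizes the computation around $E = v-V$ and outsources the final quartic estimates to \cite{AbMeVa04}, whereas you carry them out directly.
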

\begin{proof}
Without loss of generality, we may assume $w = W \equiv 0,$
the general case simply being a sum of the argument given below for $v,V$ 
to that of $w,W.$ 
Define $E = v - V$ and $\Psi = \phi - \Phi.$ 
Using elementary manipulations and noting that 
$E =  V(g - 1),  \nabla (E/V) = \nabla g, \partial_t E = V \partial_t g 
$ we find
\begin{equation*}
\begin{aligned}
\frac{d}{dt}\mathscr{L}(t) 
&=-\int_U  V\left(\left|\nabla \frac{E}{V}\right|^2 -  \frac{E}{V} \nabla \frac{E}{V} \cdot \nabla \Psi - 2 \nabla \frac{E}{V} \cdot \nabla \Psi + 2\left(\frac{E}{V} + 1\right) |\nabla \Psi|^2\right)\,dx  \\
& + \int_U -\Theta |\nabla u|^2 + \Theta u \cdot \nabla \phi \Delta \phi +  \frac{E}{V} u \cdot \nabla (E + V)\,dx.
\end{aligned}
\end{equation*}
By application of Young's  inequality, the last three terms in the first integral may be bounded in terms of 1/2 the first term plus products of terms involving $|E|$ and $|\nabla \Psi|$ 
of order greater than 2.  Since $\Delta \Phi \nabla \Phi$ is a gradient, \eqref{pressure}, we may insert the term $\Theta u \cdot \nabla \Phi \Delta \Phi$ in the second integral.
Then $u \cdot \nabla \Psi \Delta \Psi = u\cdot \nabla \phi \Delta \phi - u\cdot \nabla \Phi \Delta \phi - u\cdot \nabla \phi \Delta \Phi + u\cdot \nabla \Phi \Delta \Phi$ 
shows that 
\begin{equation*}
\begin{aligned}
& \int_U u \cdot \nabla \phi \Delta \phi \,dx = \int_U u \cdot \nabla \Psi \Delta \Psi + u\cdot \nabla \Phi \Delta \phi + u\cdot \nabla \phi \Delta \Phi  - u\cdot \nabla \Phi \Delta \Phi\,dx\\
&= \int_U u \cdot \nabla \Psi \Delta \Psi - u\cdot (\Delta \Phi \nabla \phi) + u\cdot \nabla \phi \Delta \Phi \,dx 
\leq \int_U a_1 | u|^2 + a_2|E|^2|\nabla \Psi|^2\,dx
\end{aligned}
\end{equation*}
where $4a_1a_2 = 1.$
We may estimate  the third term in the second integral by rewriting it as 
\begin{equation*}
\int_U -u \cdot \nabla \frac{E}{V} E + u\cdot \nabla \Phi E \,dx
\leq  \int_U b_1|u|^4 + b_2|E|^4 + (b_3 + C_{\omega}b_5)\left|\nabla  \frac{E}{V} \right|^2  + b_4\|\nabla \Phi\|_{\sup}^2|u|^2\,dx 
\end{equation*}
where $4b_1b_2b_3 = 1$ and $4b_4b_5 = 1$  and $C_{\omega}$ is the constant  in 
lemma \ref{wpi} with $V^{-1}$ in place of $\rho$ and $E$ (which has integral zero) in place of $f.$
 Choose $\Theta a_1$, $b_3$ and $b_5$  sufficiently small so 
that 
\begin{equation*}
C_4 := \min \left\{ \frac{1}{2}\|V\|_{\min} - b_3 - C_{\omega}b_5, 1 - 2\Theta a_1c_2 \right\} > 0.
\end{equation*}
With $b_1, \dots, b_5,a_1, a_2$  specified we fix $\Theta = b_4\|\nabla \Phi\|_{\sup}^2.$
Applying  lemma \ref{wpi}  and the Poincar\'e inequalities once more implies
\begin{equation*}
\begin{aligned}
\frac{d}{dt} \mathscr{L}(t) &\leq -C_4' ([\mathscr{L}(t) + \|\nabla u\|_{\mathsf{H}}^2 + \|\nabla (E/V)\|_{L^2}^2]\\
&+  C_5 \int_U |u|^4 + |E|^4 +  E^2 |\nabla \Psi|^2 + |E||\nabla \Psi|^2  \,dx
\end{aligned}
\end{equation*}
where $C_4, C_5= C_4', C_5(b_1, \dots, b_5,a_1, a_2 , c_2, C_{\omega}, C_4,  \|V\|_{\sup}, \|V\|_{\inf}).$  
 \cite{AbMeVa04} have treated the Dirichelet boundary condition case to 
arrive at an inequality similar to this one, but the Galiardo-Nirenberg-Sobolev  inequality and regularity of 
solutions to the Poisson equation are sufficient to arrive at the following;
using the estimates in 
the proof of theorem 1.3 in \cite{AbMeVa04} one may show that there are constants $C_1', C_2'$ and $C_3'$
so that 
\begin{equation*}
\begin{aligned}
&C_5 \int_U  |u|^4 + |E|^4  + E^2 |\nabla \Psi|^2 + |E||\nabla \Psi|^2  \,dx \leq \\
&C_1' \epsilon [\mathscr{L}(t) + \|\nabla u\|_{\mathsf{H}}^2 + \|\nabla (E/V)\|_{L^2}^2] + \frac{C_2'}{\epsilon} \mathscr{L}(t)^2 + \frac{C_3'}{\epsilon}L(t)^2
\end{aligned}
\end{equation*}
for all $\epsilon > 0.$  Choosing $\epsilon C_1 < C_4'$ completes the proof.
\end{proof}

\begin{lemma}
\label{subseq}
There exists a subsequence $\{t_j\}_{j=1}^{\infty}$ of $\mathbb{R}_+$ for which
\begin{equation*}
\lim_{j \rightarrow \infty} \|v(t_j) - V\|_{L^2} + 
+  \|w(t_j) - W\|_{L^2} + 
 \|\nabla \phi(t_j) - \Phi\|_{L^2} +
 \|u(t_j)\|_{L^2} = 0.
\end{equation*}
\end{lemma}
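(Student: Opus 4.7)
The plan is to use the free-energy dissipation of Lemma \ref{Kdecay} to extract a sequence of times along which the dissipation density vanishes, and then to identify the limit via the uniqueness of the steady state.

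First I would verify that $\mathscr{K}(t)$ is uniformly bounded below. The terms $\tfrac{1}{2}|\nabla\phi|^2$ and $\tfrac{1}{2}|u|^2$ are nonnegative, and Jensen's inequality applied to the convex function $x\log x$ on $U$, together with the conservation of $\int_U v\,dx = \mu_v$ and $\int_U w\,dx = \mu_w$, yields $\int_U v\log v + w\log w\,dx \geq \mu_v \log(\mu_v/|U|) + \mu_w\log(\mu_w/|U|)$. Combined with Lemma \ref{Kdecay} this forces the total dissipation
\begin{equation*}
\int_0^\infty\!\!\int_U |2\nabla\sqrt{v} - \sqrt{v}\nabla\phi|^2 + |2\nabla\sqrt{w} + \sqrt{w}\nabla\phi|^2 + |\nabla u|^2\,dx\,ds < \infty,
\end{equation*}
so one can extract $t_j \to \infty$ along which the spatial integral of the integrand tends to zero.

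On this sequence $\|\nabla u(t_j)\|_{L^2} \to 0$, and the Dirichlet Poincar\'e inequality delivers $\|u(t_j)\|_{L^2} \to 0$. For the densities, I would use the uniform $L^\infty(\mathbb{R}^+;L^2)$ bound from proposition \ref{fixptprop}, elliptic regularity for $\Delta\phi = v-w$, and the two-dimensional embedding $H^2 \hookrightarrow W^{1,4}$ to bound $\int_U v(t_j)|\nabla\phi(t_j)|^2\,dx$ uniformly in $j$. The triangle inequality against the vanishing term $\|2\nabla\sqrt{v(t_j)} - \sqrt{v(t_j)}\nabla\phi(t_j)\|_{L^2}$ then bounds $\|\nabla\sqrt{v(t_j)}\|_{L^2}$ uniformly, so $\{\sqrt{v(t_j)}\}$ is bounded in $H^1(U)$. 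Rellich--Kondrachov yields a subsubsequence along which $\sqrt{v(t_j)} \to s$ in $L^4(U)$, hence $v(t_j) \to v^* := s^2$ in $L^2(U)$; the same argument gives $w(t_j) \to w^*$ in $L^2(U)$, and elliptic regularity applied to $\Delta(\phi(t_j) - \phi^*) = (v(t_j) - v^*) - (w(t_j) - w^*)$ with zero Dirichlet data upgrades this to $\phi(t_j) \to \phi^*$ in $H^2(U)$.

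Finally, passing to the limit in $2\nabla\sqrt{v(t_j)} - \sqrt{v(t_j)}\nabla\phi(t_j) \to 0$ (and its $w$-analogue) gives $\nabla v^* = v^*\nabla\phi^*$ and $\nabla w^* = -w^*\nabla\phi^*$, so $v^* = C_1 e^{\phi^*}$ and $w^* = C_2 e^{-\phi^*}$; the constants are fixed by the conservation of mass passed to the limit, and $\phi^*$ solves $\Delta\phi^* = v^* - w^*$ with $\phi^*|_{\partial U} = 0$. This is exactly the Euler--Lagrange characterization of the unique minimizer of $\mathscr{J}$, so $v^* = V$, $w^* = W$, $\phi^* = \Phi$, and the claim follows. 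The delicate point, where I would be most careful, is converting pointwise-in-time vanishing of the dissipation into genuine strong $L^2(U)$-compactness of $v(t_j)$ and $w(t_j)$; the $H^1$-bound on $\sqrt{v(t_j)}$ obtained above is the key device.
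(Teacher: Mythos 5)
Your plan is correct, and it follows the same underlying mechanism as the paper --- the entropy--dissipation argument --- but it is considerably more self-contained. Where you reconstruct the full chain (Jensen's inequality to bound $\mathscr{K}$ from below, integrability of the dissipation in time, the uniform bound on $\int_U v|\nabla\phi|^2\,dx$ giving an $H^1$ bound on $\sqrt{v(t_j)}$, Rellich compactness, and identification of the limit with the Euler--Lagrange system of $\mathscr{J}$ whose critical point is unique by strict convexity), the paper simply cites Theorem 6 of Biler--Hebisch--Nadzieja for the first three limits and supplies details only for the velocity. For that fourth limit your route is actually cleaner: since $u(t)\in\mathsf{V}(U)\subset H^1_0(U)^2$, the Poincar\'e inequality gives $\|u(t_j)\|_{L^2}\le c\,\|\nabla u(t_j)\|_{L^2}\to 0$ directly, whereas the paper takes a detour through Rellich--Kondrachov compactness and Fatou's lemma to show the limit is a constant vector field vanishing on the boundary. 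Two small points worth spelling out if you write this up: (i) in passing from $2\nabla s = s\nabla\phi^*$ to $s = Ce^{\phi^*/2}$ you should argue via $\nabla(e^{-\phi^*/2}s)=0$ and the connectedness of $U$ (which the paper assumes), since $s$ may a priori vanish on part of the domain and one cannot divide by it; and (ii) the subsubsequence extractions for $v$, $w$, $\phi$, $u$ must be nested so that a single sequence $\{t_j\}$ realizes all four limits --- harmless here because the uniqueness of the minimizer of $\mathscr{J}$ pins down the limit independently of the subsequence.
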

\begin{proof}
\cite{BiHeNa94} showed in the proof of theorem 6 of that article 
that lemma \ref{Kdecay} is sufficient to find a subsequence $\{t_{j'}\}_{j' = 1}^{\infty}$
of $\mathbb{R}_+$ for which the first three limits hold.  One must assume that $\sup_{\mathbb{R}_+}\|v,w\|_{L^2} < \infty$
which is the case here.  Since  $\{t_{j'}\}_{j' = 1}^{\infty}$ may be chosen from a set of positive measure,
by lemma \ref{Kdecay}, 
we may choose a second subsequence $\{t_{j''}\}_{j'' = 1}^{\infty} \subset \{t_{j'}\}_{j' = 1}^{\infty}$ for which additionally 
$\lim_{j'' \rightarrow \infty} \|\nabla u(t_{j''})\|_{L^2} = 0.$  By the Kondrakov's embedding theorem ($\|u(t)\|_{L^2}$ 
is bounded by $K(0) < \infty$ for all $t \in \mathbb{R}_+$) there is a third subsequence $\{t_j\}_{j=1}^{\infty}$ for which 
$u(t_j)$ converges to function $U \in \mathsf{V}.$   By Fatou's lemma, $\|\nabla U\|_{L^2} = 0$ so that $U$ is a constant function.
But $U \in \mathsf{V}$ then implies that $U \equiv 0,$ giving the fourth limit.
\end{proof}

Lemma \ref{Ldiff} and lemma \ref{subseq}  combined 
imply the exponential convergence to the stationary solution.
\begin{proof}[Proof of theorem \ref{l2ass}]  Let $\{t_j\}_{j=1}^{\infty}$ be the sequence
provided in lemma \ref{subseq}.  It follows from lemma \ref{Ldiff} that $\lim_{j \rightarrow \infty} L(t_j) = 0.$
Choose $j$ sufficiently large so that 
\begin{equation*}
 C_2 L(t_j)^2 
+ C_3 L(t_j)^4 \leq -\frac{C_1}{2}L(t_j).
\end{equation*}
Then for $t \geq t_j,$ $L(t)$ is nonincreasing and so 
\begin{equation*}
\frac{d}{dt}L(t) \leq -C_1 L(t) + C_2 L(t)^2 
+ C_3 L(t)^4 \leq -\frac{C_1}{2}L(t).
\end{equation*}
Setting $\lambda = C_1/2$ and $C_{\dagger} = \max\left\{1, \|V\|_{\min}^{-1}, \|W\|_{\min}^{-1}\right\}\sup_{t \in [0,t_j]} L(t)$
completes the proof.
\end{proof}

The proofs of lemmas \ref{Kdecay} through \ref{subseq} and the proof of theorem \ref{l2ass} hold
similarly  solutions of the Debye-H\"uckel system  by formally setting $u \equiv 0.$  We thus have
\begin{corollary}
\label{nonuniform}
There exists a positive constant $\lambda$ depending only on $U,$ $V,$ $\|V\|_{\inf},$ $\|V\|_{\sup},$ $W,$ $\|W\|_{\inf}$ and $\|W\|_{\sup}$
such that the solution to the Debye-Huckel system $v,w, \phi$ in dimensions $2$ and $3$ converges with rate $e^{-\lambda t}$ to steady state solution $V,W, \Phi$ in
the $L^2, L^2$ and $H^1$ norms respectively provided $\sup_{\mathbb{R}_+}\|v,w\|_{L^2} < \infty.$
\end{corollary}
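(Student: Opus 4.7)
The plan is to observe that formally setting $u \equiv 0$ in the coupled system (\ref{NS}--\ref{D}) reduces it exactly to the Debye-H\"uckel system, and to trace through the arguments of Lemmas \ref{Kdecay}--\ref{subseq} and of Theorem \ref{l2ass} under this reduction. In the functionals $\mathscr{K}$ and $\mathscr{L}$, the kinetic energy contribution $|u|^2/2$ (resp.\ $\Theta |u|^2/2$) vanishes, and the convective terms $u \cdot \nabla v$, $u \cdot \nabla w$ disappear from the evolution equations. The assumption $\sup_{\mathbb{R}_+}\|v,w\|_{L^2} < \infty$ plays the role of Proposition \ref{fixptprop}, since in the stand-alone Debye-H\"uckel setting we no longer have access to the $M_1$ bound derived from the coupled energy estimate.

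First I would verify Lemma \ref{Kdecay} without the fluid: the integration-by-parts computation is pointwise in space and uses only the no-flux boundary conditions (\ref{F1}), (\ref{F2}), so it produces the standard entropy dissipation in any dimension. The weighted Poincar\'e inequality (Lemma \ref{wpi}) is already stated for subsets of $\mathbb{R}^d$ with no dimensional restriction, and the generalized-solutions lemma for $g, h$ transfers verbatim. Lemma \ref{subseq} likewise carries over for the first three limits; the $u$-limit is simply omitted.

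Next I would re-examine Lemma \ref{Ldiff} with $u \equiv 0$. All terms involving $u$, namely $\|\nabla u\|_{\mathsf{H}}^2$, $u \cdot \nabla \phi \Delta \phi$, $|u|^4$, and $b_4 \|\nabla \Phi\|_{\sup}^2 |u|^2$, drop out, so the only quartic quantities left to absorb are $|E|^4$, $E^2 |\nabla \Psi|^2$, $|E||\nabla \Psi|^2$ (and their $w$-analogues). Using the a priori bound $\sup_{\mathbb{R}_+}\|v,w\|_{L^2} < \infty$ together with elliptic regularity $\|\nabla \Psi\|_{L^q} \leq c\,\|E - F\|_{L^r}$ for appropriate Sobolev-conjugate $q, r$ in dimensions 2 and 3, and Gagliardo--Nirenberg interpolation of $\|E\|_{L^4}$ between $L^2$ and $H^1$, one obtains a bound of the form
\begin{equation*}
C_5 \int_U |E|^4 + E^2 |\nabla \Psi|^2 + |E||\nabla \Psi|^2 \,dx \;\leq\; C_1' \epsilon \bigl[\mathscr{L}(t) + \|\nabla (E/V)\|_{L^2}^2\bigr] + \tfrac{C_2'}{\epsilon}\mathscr{L}(t)^2 + \tfrac{C_3'}{\epsilon}\mathscr{L}(t)^4,
\end{equation*}
so that choosing $\epsilon$ small enough yields $\mathscr{L}' \leq -C_1 \mathscr{L} + C_2 \mathscr{L}^2 + C_3 \mathscr{L}^4$. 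The proof of Theorem \ref{l2ass} then applies unchanged: from Lemma \ref{subseq} we pick $t_j$ with $L(t_j)$ small enough that $C_2 L(t_j) + C_3 L(t_j)^3 \leq C_1/2$, conclude that $L$ is nonincreasing on $[t_j, \infty)$, and Gr\"onwall gives exponential decay at rate $\lambda = C_1/2$.

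The main obstacle is verifying the quartic estimate in 3D, since the sharper Biler-type inequality \eqref{bilerin} invoked in the 2D hydrodynamic case is specific to planar domains. The remedy is to abandon the $L \log L$ machinery entirely and rely on Gagliardo--Nirenberg together with the assumed $L^2$ bound; because no fluid term must be absorbed simultaneously, the resulting inequality is strictly easier to close than in Lemma \ref{Ldiff}. Everything else is bookkeeping to confirm that the constants $C_1, C_2, C_3$ depend only on $U$, $V$, $W$ and the stated sup/inf norms.
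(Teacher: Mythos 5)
Your proposal is correct and follows essentially the same route as the paper, whose proof of Corollary \ref{nonuniform} consists precisely of observing that Lemmas \ref{Kdecay} through \ref{subseq} and the proof of Theorem \ref{l2ass} go through verbatim with $u \equiv 0$, the hypothesis $\sup_{\mathbb{R}_+}\|v,w\|_{L^2} < \infty$ standing in for the a priori bound of Proposition \ref{fixptprop}. Your added remark about the quartic terms is consistent with the paper, since Lemma \ref{Ldiff} already handles them via the Gagliardo--Nirenberg estimates of \cite{AbMeVa04} rather than the two-dimensional inequality \eqref{bilerin}, which enters only the existence argument.
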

\begin{remark}
This corollary in some sense improves theorem 2 of \cite{BiDo00} where the domain $U$ is assumed to be uniformly convex.
If $U$ is uniformly convex, then one may  produce a bound 
{\footnotesize
\begin{equation*}
K_{\mathrm{rel}}(t) := \int_U v \log v  + w \log w - V\log V - W \log W - \frac{1}{2}|\nabla \phi|^2 + \frac{1}{2}|\nabla \Phi|^2 \,dx 
\geq -\frac{1}{\lambda'} \frac{d}{dt} K_{\mathrm{rel}}(t)
\end{equation*}
}for some $\lambda'$ depending on $U$ by applying remark 3.7 of \cite{ArMaTo01} 
concerning a logarithmic Sobolev inequality for bounded domains and noting the convexity of $J.$  
The $L^1$ convergence of $v,w$
to $V,W$ follows then from the Csis\'ar-Kullback inequality, \cite{UnArMaTo00}.
\end{remark}
\section{Conclusion}
The equations of a incompressible, Newtonian fluid
coupled with charges in two dimensions have been studied.  The key step 
toward the existence of global in time solutions is the existence 
of a decaying entropy function which guarantees the dissipation
of kinetic and electrostatic energy and entropy.  Future avenues of study are the 
regularity of these solutions and generalizations to other incompressible Newtonian fluid systems 
coupled with polarized particles.  The state of knowlege (rather lack of knowledge) concerning the global in time existence of
weak solutions to the Debye-H\"uckel system in three dimensions prevents the generalization
of theorem \ref{globalex} in this paper to three dimensions.


\begin{thebibliography}{9}
\bibitem{AbMeVa04} N. B. Abdallah , F. M\'ehats , N. Vauchelet 
\emph{A note on the long time behavior for the drift-diffusion-Poisson system}.
Comptes Rendus Mathematique, 
\textbf{339}, 2004, 683--688. 
\bibitem{ArMaTo01}
A. Arnold, P. Markowich, G. Toscani, A. Unterreiter, 
\emph{On convex Sobolev inequalities and the rate of convergence to equilibrium for Fokker-Planck type equations.}.
 Comm. Partial Differential Equations  \textbf{26}  1-2, (2001),  43--100.
\bibitem{BeCh02} Y. Ben and H.-C. Chang, \emph{Nonlinear Smoluchowski Slip Velocity and Vortex Generation}. J.  Fluid Mech.,  \textbf{461} (2002), 229--238.
\bibitem{BiDo00} P. Biler and J. Dolbeault, \emph{Long Time Behavior 
of Solutions to Nernst-Plank and Debye-H\"uckel Drift-Diffusion Systems}.
Ann. Henri Poincar\'{e} \textbf{1} (2000), 461--472. 
\bibitem{BiHeNa94} P. Biler, W. Hebisch, T. Nadzieja, \emph{The Debye system: existence and 
large time behavior of solutions}.
Nonlinear Analysis T.M.A. \textbf{23} (1994), 1189--1209. \bibitem{Co05} P. Constantin, \emph{Nonlinear Fokker-Planck Navier-Stokes systems}. Commun. Math. Sciences \textbf{3}(4),  (2005) 531--544.
\bibitem{Co07} P. Constantin, \emph{Smoluchowski Navier-Stokes systems. Stochastic analysis and partial differential equations}. Contemp. Math., \textbf{429}, (2007) 85--109.
\bibitem{CoFeTiZa07} P. Constantin, C. Fefferman, E. Titi and A. Zarnescu, 
\emph{Regularity of coupled two-dimensional nonlinear Fokker-Planck and Navier-Stokes systems}.  Comm. Math. Phys.  \textbf{270},  (2007),  no. 3, 789--811.
\bibitem{CoMa08} P. Constantin, N. Masmoudi, \emph{Global well-posedness for a Smoluchowski equation coupled with Navier-Stokes equations in 2D}.  Comm. Math. Phys.  \textbf{278}  (2008),  no. 1, 179--191. 
\bibitem{DeHu23} P. Debye and E. H\"uckel, \emph{Zur Theorie der Electrolyte. II}. Phys. Zft. \textbf{24} (1923), 305--325.
\bibitem{GILBARG} D. Gilbarg and N. Trudinger, \emph{Elliptic Partial Differential Equations of Second Order}. Springer, Berlin, 1998.
\bibitem{GoLi89} D. Gogny , P.-L. Lions, \emph{Sur les \'etats d\'equilibre pour les densit\'es \'electroniques dans les plasmas}.
RAIRO Mod\`el. Math. Anal. Num\'er. \textbf{23} (1989), 137--153. 
\bibitem{LIEBLOSS} E. Lieb and M. Loss, \emph{Analysis}.
Graduate Studies in Mathematics, \textbf{14}, AMS, Providence, Rhode Island, 2000.
\bibitem{LiLi95} F.-H. Lin, C. Liu, 
\emph{Nonparabolic dissipative systems modeling the flow of liquid crystals}.  
Comm. Pure Appl. Math.  \textbf{48}  (1995),  no. 5, 501--537.
\bibitem{LiLiZh05} F.-H.Lin, C. Liu and P. Zhang, \emph{On hydrodynamics of viscoelastic fluids}. Comm. Pure Appl. Math. \textbf{58}(11), (2005) 1437--1471.
\bibitem{PaCa06} P. Jinhae and C. Calderer, \emph{Analysis of nonlocal electrostatic effects in chiral smectic C liquid crystals}.  SIAM J. Appl. Math.  \textbf{66} (6)  (2006),  2107--2126.
\bibitem{RyLiZi07} R. Ryham, C. Liu and L. Zikatanov, \emph{Mathematical models for the deformation of electrolyte droplets}.  Discrete Contin. Dyn. Syst. Ser. B  8  (2007),  no. 3, 649--661.
\bibitem{SqBa04} T.  Squires and M.  Bazant, \emph{Induced-charge electro-osmosis}.  J. Fluid. Mech., \textbf{509} (2004) 217--252. 
\bibitem{TEMAM} R. Temam, \emph{Navier-Stokes equations. Theory and numerical analysis}.
Studies in Mathematics and its Applications, \textbf{2}, North-Holland Publishing Co., Amsterdam-New York, 1979.
\bibitem{UnArMaTo00} A. Unterreiter, A. Arnold, P. Markowich, G. Toscani, 
\emph{On generalized Csisz\'ar-Kullback inequalities}.  Monatsh. Math.  \textbf{131}  (2000),  no. 3, 235--253. 


\end{thebibliography}
\end{document}